\newcommand{\R}{\mathbb{R}}
\newcommand{\Z}{\mathbb{Z}}
\newcommand{\N}{\mathbb{N}}
\newcommand{\X}{\mathbb{X}}
\newcommand{\B}{\mathcal{B}}
\newcommand{\INF}{{^\infty}}
\newcommand{\ID}{\mathrm{id}}
\newcommand{\orb}{\mathcal{O}}
\newcommand{\dill}{dill}
\newcommand{\Dill}{Dill}
\begin{document}

\title{Block Maps between Primitive Uniform and Pisot Substitutions\thanks{Research supported by the Academy of Finland Grant 131558}}

\author{Ville Salo \and Ilkka T\"orm\"a}
\institute{TUCS -- Turku Centre for Computer Science \\ University of Turku \\ \email{vosalo@utu.fi} \and TUCS -- Turku Centre for Computer Science \\ University of Turku \\ \email{iatorm@utu.fi}}

\maketitle

\setcounter{footnote}{0}

\begin{abstract}
In this article, we prove that for all pairs of primitive Pisot or uniform substitutions with the same dominating eigenvalue, there exists a finite set of block maps such that every block map between the corresponding subshifts is an element of this set, up to a shift.
\end{abstract}

\section{Introduction}
\label{sec:Intro}

To a substitution $\tau$ with a fixed point $x$, we can attach a subshift $X_\tau$ in a natural way: by taking the orbit closure of $x$. If the substitution is primitive, the resulting subshift is independent of the choice of $x$, and is uniformly recurrent (also called minimal), which is an interesting dynamical property. Not all uniformly recurrent subshifts arise this way, and intuitively the ones generated by substitutions have a simple self-similar structure. In this article, we study the sets of morphisms (shift-commuting continuous functions) between pairs of such subshifts, with the goal of showing that also these sets are in some sense quite simple.

Morphisms between associated subshifts of substitutions have also been studied in for example \cite{Du00}, where it was proved that all endomorphisms of such subshifts are automorphisms. In \cite{Co71}, it was shown that every endomorphism of a binary uniform primitive substitution is a shift map, possibly composed with a bit flip. In \cite{HoPa89}, this was generalized by proving that for certain pairs of primitive uniform substitutions, up to powers of the shift, there are finitely many block maps between their subshifts. In fact, \cite{HoPa89} considers not only continuous, but measurable functions, so their results are stronger; we only consider the topological case in this paper, but see Section~\ref{sec:Finite} for some related questions. There is also some work on the non-uniform case. For example, in \cite{Ol13} it was shown that the endomorphisms of Sturmian substitutions are shift maps. Also, results of a similar flavor (though not directly related) were obtained in \cite{CoKeLe08,CoDyKeLe12}, where an upper bound was obtained for the number of letters in a substitution $\rho$ whose subshift is topologically conjugate to that of a fixed substitution $\tau$.

We extend the (topological) results of \cite{HoPa89} to a more general class of substitution pairs, namely, pairs $(\tau, \rho)$ of primitive substitutions of `balanced growth' with the same dominating eigenvalue (asymptotic growth rate of words). The balanced growth property is implied by both uniformness and the Pisot property, and defined in Section~\ref{sec:BalanceAndInvertibility}. Our main theorem is the following.

\begin{theorem}
\label{thm:IntroTheorem}
Let $\tau$ and $\rho$ be primitive aperiodic substitutions of balanced growth whose associated matrices have the same dominant eigenvalue $\lambda > 1$, and let $X_\tau$ and $X_\rho$ be the associated subshifts. Then there exists a finite set $P$ of block maps from $X_\tau$ to $X_\rho$ such that if $f : X_\tau \to X_\rho$ is a block map, then $f = \sigma^k \circ g$ or $g = \sigma^k \circ f$ for some $g \in P$ and $k \in \N$.
\end{theorem}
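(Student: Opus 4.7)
The plan is to combine Mossé's recognizability theorem for primitive aperiodic substitutions with the balanced growth hypothesis in order to show that every block map $f : X_\tau \to X_\rho$ equals, up to a shift, a block map of bounded radius. Since the set of block maps $X_\tau \to X_\rho$ of radius at most any fixed bound is finite, this produces the desired set $P$. By recognizability, every $x \in X_\tau$ has a unique decomposition $x = \sigma^i(\tau(y))$ with $y \in X_\tau$ and $0 \le i < |\tau(y_0)|$, where the map $x \mapsto (y,i)$ is local of some radius $R_\tau$; an analogous radius $R_\rho$ is obtained for $\rho$.

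Given a block map $f : X_\tau \to X_\rho$ of radius $r$, I would construct an induced block map $\hat f : X_\tau \to X_\rho$ via the implicit relation $f(\tau(y)) = \sigma^{i(y)}\rho(\hat f(y))$: apply $f$ to $\tau(y)$ and desubstitute the result under $\rho$. The central estimate is that $\hat f(y)_k$ depends on a window of $f(\tau(y))$ of radius $O(R_\rho)$ centred near position $c_\rho \lambda k$ (for a balanced-growth constant $c_\rho$), hence on a window of $\tau(y)$ of radius $O(R_\rho + r)$, and therefore on a window of $y$ of radius $O((R_\rho + r)/\lambda + R_\tau)$. Here it is crucial that $\tau$ and $\rho$ share the dominant eigenvalue $\lambda$ and that both are of balanced growth, so that one unit of $\rho$-structure on the output matches one unit of $\tau$-structure on the input. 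Since $\lambda > 1$, iterating this construction yields a sequence $f_0 = f, f_1 = \hat f, f_2, \ldots$ of block maps with strictly decreasing radii, eventually reaching a universal bound $r_0$ depending only on $\tau$ and $\rho$. Taking $P$ to be the finite set of all block maps $X_\tau \to X_\rho$ of radius at most $r_0$, one can then invert the relations $f_{n-1} \circ \tau = \sigma^{i_n(\cdot)} \circ \rho \circ f_n$ to recover $f$ from the terminal $g \in P$ up to an integer shift $K$, which appears as $f = \sigma^K \circ g$ when $K \ge 0$ and as $g = \sigma^{-K} \circ f$ when $K < 0$, matching the disjunction with $k \in \N$ in the conclusion.

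The main obstacle is making the shift-accounting rigorous. The desubstitution offset $i(y)$ is a priori a function of $y$, and iterating the construction compounds these offsets in a way that need not be constant along $X_\tau$. One must show that, after composing with a single power of $\sigma$, the remaining position-dependent discrepancies vanish identically; primitivity (which makes $X_\tau$ minimal) together with uniqueness of the recognizability decomposition are the tools that collapse the family of offsets into a single global shift. A secondary point is to verify that the induced map $\hat f$ actually lands inside $X_\rho$ rather than in some larger superset, which follows from minimality of $X_\rho$ applied to $f(\tau(y))$ and from reading off $\hat f(y)$ via the $\rho$-recognizability decomposition.
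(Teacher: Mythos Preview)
Your plan has a genuine gap at its central step: the induced map $\hat f$ need not be a block map when $\tau$ and $\rho$ are non-uniform. Concretely, with your implicit relation $f(\tau(y)) = \sigma^{i(y)}\rho(\hat f(y))$ and $z = \hat f(y)$, applying a single shift gives
\[
f(\tau(\sigma y)) = \sigma^{|\tau(y_0)|}\bigl(f(\tau(y))\bigr) = \sigma^{|\tau(y_0)|+i(y)}\bigl(\rho(z)\bigr),
\]
and re-decomposing under $\rho$ yields $\hat f(\sigma y) = \sigma^{m}(z)$ where $m$ is the number of complete $\rho$-blocks contained in the first $|\tau(y_0)|+i(y)$ symbols of $\rho(z)$. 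Shift-commutation of $\hat f$ requires $m=1$ identically, i.e.\ $|\rho(z_0)| \le |\tau(y_0)|+i(y) < |\rho(z_0)|+|\rho(z_1)|$ for every $y$, which fails whenever the letter-image lengths of $\tau$ and $\rho$ differ. So $\hat f$ is only shift-commuting up to a bounded but genuinely $y$-dependent error; minimality and uniqueness of recognizability do not collapse this error to a constant. Your obstacle paragraph treats the varying offset as something to be absorbed once at the end, but the problem already prevents $\hat f$ from being a block map at each iteration, so there is no sequence of block maps of decreasing radius to work with.

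This is exactly the difficulty the paper isolates in the introduction (``the map $\tau^{-1}\circ f\circ\tau$ need not be a block map'') and is the reason it introduces \emph{dill maps}: continuous maps with a continuous cocycle, carrying a pair of invariants $(I,D)$ in place of a single radius, and closed under composition with substitutions and their almost-inverses. The paper runs your conjugation idea on the sequence $\Phi_i = \rho^{-i}\circ f\circ \tau^i$ of dill maps, shows via Lemma~\ref{lem:Threes} and Lemma~\ref{lem:SublinearContribution} that both $D(\Phi_i)$ and $I(\Phi_i)$ eventually fall below a bound depending only on $\tau,\rho$, and then uses finiteness of dill maps with bounded invariants (Lemma~\ref{lem:SmallImplementation}) to find a cycle $\Phi_{i+j}=\Phi_i$. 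Injectivity of conjugation up to almost equivalence then pulls this back to $f\sim\Phi_\ell$ for some $\Phi_\ell$ in the finite set, and Lemma~\ref{lem:Almo} converts almost equivalence of block maps into the stated $\sigma^k$ relation. Your argument is essentially the uniform case of Section~\ref{sec:UniformCase} (where $m\equiv 1$ does hold by Lemma~\ref{lem:PeriodClasses}); the balanced-growth case genuinely needs the dill-map framework or something equivalent.
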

\setcounter{theorem}{0}

As a corollary, for a subshift generated by a substitution satisfying the assumptions, we obtain that its endomorphism monoid is a finitely generated group where the subgroup of shift maps is of finite index (see Theorem~\ref{thm:FinitelyManyCA2way2}). For example, the result applies to
\begin{itemize}
\item the Thue-Morse substitution $a \mapsto ab, b \mapsto ba$,
\item the Fibonacci substitution $a \mapsto ab, b \mapsto a$, and
\item the Tribonacci substitution $a \mapsto ab, b \mapsto ac, c \mapsto a$.
\end{itemize}
Our result for the Thue-Morse substitution follows from \cite{Co71}, and for the Fibonacci substitution from \cite{Ol13}. We do not know a published reference for results about the endomorphism monoid of the Tribonacci substitution.

Our method is elementary, and easy to understand at least in the case that $\tau = \rho$ is uniform: A primitive uniform substitution has an `inverse' $\tau^{-1}$, and if a block map is conjugated by a primitive substitution in the sense of group theory -- by applying the transformation $f \mapsto \tau^{-1} \circ f \circ \tau$ -- its radius decreases unless it was small to begin with. Since there are finitely many block maps of a given radius, the process of repeated conjugation eventually enters a loop. Since conjugation is also bijective up to composition with a shift map, this means that some shift of $f$ must have a small radius. We give a detailed proof of this special case in Section~\ref{sec:UniformCase}, obtaining a simple proof of the topological result of \cite{HoPa89} mentioned above. The purpose of this section is mainly pedagogical, that is, it serves as an introduction to the general case.

The problem with this approach in the non-uniform case is that while a non-uniform primitive substitution has a kind of inverse as well, the map $\tau^{-1} \circ f \circ \tau$ need not be a block map. To solve this problem, we need a common generalization of substitutions and block maps, which is closed under composition. Since both substitutions and block maps are orbit-preserving and have a continuous cocycle, it is natural to consider this generalization. We call such maps \dill{} maps, and discuss their basic properties in Section~\ref{sec:OrbitPreservingMaps}.

We define an `in-radius' and an `out-radius' for \dill{} maps, which together play the role of their radius, in the sense that there are finitely many \dill{} maps having a particular combination of in- and out-radii. The behavior of these and other auxiliary properties of \dill{} maps is studied in Section~\ref{sec:Invariants}, and in Section~\ref{sec:BalanceAndInvertibility}, we discuss these properties for primitive Pisot substitutions. In particular, we show that primitive substitutions are invertible as \dill{} maps in a specific sense, and that Pisot substitutions have the balanced growth property. In Section~\ref{sec:Conjugation}, we show that when repeatedly conjugated by primitive substitutions of balanced growth (in the group theoretical sense, as described above), both radii of a \dill{} map decrease below a global bound depending only on the substitution, analogously to the uniform case. In Section~\ref{sec:Finite}, we prove our main theorem using this technique, and discuss possible extensions and optimality.

\section{Definitions and Preliminary Results}

Let $A$ be a finite set, called the \emph{alphabet}. We denote by $A^*$ the set of finite words over $A$, and by $A^\N$ and $A^\Z$ the sets of infinite one- and two-sided words over $A$, respectively. The length of a word $w$ is denoted $|w|$. The empty word is denoted by $\epsilon$, and $A^+ = A^* \setminus \{\epsilon\}$. The $i$'th letter of a word $w$ is denoted $w_i$, and \emph{subwords} (also known as \emph{factors}) can be extracted with $w_{[i,j]} = w_i w_{i+1} \cdots w_j$. If $v$ is a factor of $w$, we denote $v \sqsubset w$. If $X$ is a set of words, we write $v \sqsubset X$ if and only if $v \sqsubset x$ for some $x \in X$. The number of indices $i$ such that $v = w_{[i,i+|v|-1]}$ is denoted $|w|_v$.

We denote $A^\omega = A^* \cup A^\N$, and make all of $A^\omega$, $A^\N$ and $A^\Z$ metric spaces with $d(x,y) = \inf \{ 2^{-n} \;|\; x_{[-n,n]} = y_{[-n,n]} \}$ for all $x, y \in A^\Z$, and
\[ d(u,v) = \inf \{ 2^{-n} \;|\; \forall i \in [0,n] : u_i = v_i \mbox{~or both $u_i$ and $v_i$ are undefined} \} \]
for all $u, v \in A^\omega$. Thus, infinite words can be estimated with finite words and other infinite words in the obvious way, and finite words are isolated points.

In the following, $\X$ stands for both $\N$ and $\Z$. The \emph{shift map} $\sigma$ is a continuous self-map of $A^\X$, defined by $\sigma(x)_i = x_{i+1}$ for all $i \in \X$. A \emph{subshift} is a subset $X$ of $A^\X$ which is closed with respect to the metric $d$ and satisfies $\sigma(X) \subset X$ if $\X = \N$, and $\sigma(X) = X$ if $\X = \Z$. Alternatively, every subshift is defined by a set $F \subset A^*$ of \emph{forbidden words} as
\[ X = \{ x \in A^\X \;|\; \forall w \in F: w \not\sqsubset x \}. \]
We denote $\B_n(X) = \{ w \in A^n \;|\; w \sqsubset X \}$ for all $n \in \N$, and $\B(X) = \bigcup_{n \in \N} \B_n(X)$. For $x \in X$ we define $\orb(x) = \{\sigma^n(x) \;|\; n \in \X\}$, called the \emph{orbit} of $x$.

%For a subshift $X \subset A^\X$ and word $w \in A^*$, we denote $w \sqsubset X$ if $w \sqsubset x$ for some $x \in X$.
An infinite word $x$, or a subshift $X$, is \emph{uniformly recurrent} if for all $w \sqsubset x$ ($w \sqsubset X$) there exists $n \in \N$ such that $w \sqsubset v$ for all $v \sqsubset x$ ($v \sqsubset X$, respectively) with $|v| \geq n$. We say $x$ or $X$ is \emph{linearly recurrent} with constant $N \in \N$ if $n$ above can always be chosen as $N|w|$. Uniformly recurrent subshifts are also called \emph{minimal} in the literature because they are exactly the subshifts that contain no proper subshifts. This characterization in particular implies that $\overline{\orb(x)} = X$ for all $x \in X$.

Let $X \subset A^\X$ and $Y \subset B^\X$ be subshifts (of the same type). A continuous function $f : X \to Y$ is called a \emph{block map} if $f \circ \sigma = \sigma \circ f$. Alternatively, a block map is defined by a \emph{local function} $f' : \B_{r-\ell+1}(X) \to B$ and a finite \emph{neighborhood} $N = [\ell,r] \subset \X$ by $f(x)_i = f'(x_{[i+\ell, i+r]})$ for all $x \in X$. The morphisms considered in this article are the block maps. If $N = [0,r]$ in the case $\X = \N$, or $N = [-r,r]$ in the case $\X = \Z$, we say $r$ is a \emph{radius} of $f$. Since all continuous shift-commuting maps are defined by local functions, we also apply such functions to words, by abuse of notation: a radius $r \in \N$ and a corresponding local function $f' : \B_{2r + 1}(X) \to B$ are chosen implicitly, and for all $w \in \B(X)$ with $|w| \geq 2r + 1$, the image $f(w) \in B^{|w| - 2r}$ is defined by $f(w)_i = f'(w_{[i,i + 2r]})$.

A \emph{substitution} on the alphabet $A$ is a map $\tau : A^* \to A^*$ satisfying $\tau(uv) = \tau(u) \tau(v)$ for all $u, v \in A^*$. A substitution is effectively determined by the images of letters. The \emph{associated matrix} of $\tau$ is the $|A| \times |A|$ integer matrix whose coordinate $(a,b)$ contains $|\tau(a)|_b$ for all $a, b \in A$. This matrix has a \emph{dominating eigenvalue} $\lambda > 0$, greater in absolute value than any other eigenvalue, also known as the Frobenius eigenvalue. The \emph{one- and two-sided subshifts} of $\tau$ are both defined by the set of forbidden words
\[ \{ w \in A^* \;|\; \forall a \in A, n \in \N : w \not\sqsubset \tau^n(a) \}, \]
and denoted by $X_\tau \subset A^\N$ and $X_\tau^\leftrightarrow \subset A^\Z$, respectively. We say $\tau$ is
\begin{itemize}
\item \emph{non-erasing} if $|\tau(a)| > 0$ for all $a \in A$.
\item \emph{uniform} if $|\tau(a)| = |\tau(b)|$ for all $a, b \in A$,
\item \emph{injective} if $\tau(a) \neq \tau(b)$ for all $a \neq b \in A$,
\item \emph{primitive} if $a \sqsubset \tau^n(b)$ for all $a, b \in A$ and large enough $n \in \N$,
\item \emph{aperiodic} if $X_\tau$ does not contain a $\sigma$-periodic point, and
\item \emph{Pisot}, if the eigenvalues of its associated matrix are $\lambda_1, \ldots, \lambda_k$, where $|\lambda_1| > 1$ and $|\lambda_i| < 1$ for all $i > 1$.
\end{itemize}

A non-erasing substitution $\tau$ on $A$ acts on $A^\N$ by $\tau(x) = \tau(x_1) \tau(x_2) \ldots$ and on $A^\Z$ by $\tau(x) = \cdots \tau(x_{-2}) \tau(x_{-1}) . \tau(x_0) \tau(x_1) \cdots$, where $.$ denotes the origin. % and even if $\tau$ is primitive, it acts on the subshift $X_\tau$ even if it is erasing.
We also refer to such actions as substitutions. A \emph{two-sided fixed point} of $\tau$ is an element $x \in A^\Z$ such that $\tau(x) = x$ and $x_{-1}x_0 \sqsubset \tau^n(a)$ for some $a \in A$ and $n \in \N$. If $\tau$ is primitive and has a two-sided fixed point $x \in A^\Z$, then for all $w \in A^*$ we have $w \sqsubset x$ if and only if $w \sqsubset \tau^n(a)$ for some $a \in A$ and $n \in \N$. It is also well known that the subshift of a primitive substitution is linearly recurrent. In this case $X_\tau$ is equal to the orbit closure of any $\tau$-periodic point $x \in A^\N$, and thus infinite if and only if $\tau$ is aperiodic. Since the existence of a $\tau$-periodic point which is $\sigma$-eventually periodic is decidable \cite{HaLi86,Pa86}, so is the finiteness of $X_\tau$.

The following definitions of recognizability are direct generalizations of the ones from \cite{Mo92} and \cite{Mo96a}.

Let $\tau$ be a substitution on $A$ with an aperiodic fixed point $x \in A^\N$ in which all elements of $A$ occur. We denote $E_x(0) = 0$ and $E_x(p) = |\tau(x_{[0,p-1]})|$ for $p > 0$, and $E^x = \{E_x(p) \;|\; p \geq 0\}$. Let $w \in A^*$, and let $i, j \in \N$ be such that
\[ w = x_{[i,i+|w|-1]} = x_{[j,j+|w|-1]}. \]
We say $w$ \emph{has the same $1$-cutting} at $i$ and $j$ if
\[ E^x \cap [i, i+|w|-1] + (j - i) = E^x \cap [j, j+|w|-1]. \]
We say that $w$ \emph{comes from} the word $v$ at $i$ if $v = x_{[p, q-1]}$ for the unique $p$ and $q$ such that $i \in [E_x(p),E_x(p+1)-1]$ and $i + |w| - 1 \in [E_x(q-1),E_x(q)-1]$. We say that $\tau$ is \emph{bilaterally recognizable} (or simply \emph{recognizable}) if there exists $L \in \N$ such that for all $i, j, n \in \N$ with $x_{[i-L,i+n+L-1]} = x_{[j-L,j+n+L-1]}$, we have that $w = x_{[i,i+n-1]} = x_{[j,j+n-1]}$ has the same $1$-cutting at $i$ and $j$. If in addition $w$ always comes from the same word at $i$ and $j$, we say $\tau$ is \emph{strictly recognizable}.

We note here that the definition of recognizability does not depend on the choice of the fixed point $x$ (even if the set $E^x$ does), as long as all letters of $A$ occur in it, since all fixed points that share this property have the same set of factors. We also remark that strict recognizability is not a standard notion, and it is in fact implied by primitivity.

\begin{lemma}[Theorem 2 of \cite{Mo96a}]
\label{lem:Mosse}
A primitive aperiodic substitution is strictly recognizable.
\end{lemma}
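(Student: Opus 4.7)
The plan is to split the definition into its two components and deal with them separately. First, the bilateral part -- that the $1$-cutting of $w$ is determined by a sufficiently large matching window in $x$ -- is the original content of Theorem 2 of \cite{Mo96a}, and I would simply invoke it: there exists a constant $L_0$ such that if $x_{[i-L_0,i+n+L_0-1]} = x_{[j-L_0,j+n+L_0-1]}$, then $w = x_{[i,i+n-1]}$ has the same $1$-cutting at $i$ and $j$. The proof in \cite{Mo96a} is nontrivial but relies on aperiodicity (to rule out shifted cuttings) and linear recurrence (a consequence of primitivity) to obtain an explicit bound on the window size.

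Second, I would upgrade to strict recognizability. Take $L \geq L_0 + \max_{a \in A}|\tau(a)|$, so that any matching $L$-window around $w$ in $x$ contains not only $w$ but the complete $\tau$-images of the two preimages $v = x_{[p,q-1]}$ and $v' = x_{[p',q'-1]}$, extended out to their respective block boundaries. Combined with the common $1$-cutting just obtained, this gives $\tau(v) = \tau(v')$ letter by letter. To upgrade this to $v = v'$ one would like to invert $\tau$ on letters, which is automatic when $\tau$ is injective on letters. In general I would iterate: the matching window around $w$ at scale $L$ forces a matching ``desubstituted'' window around the preimages at scale roughly $L/\lambda$, to which one reapplies bilateral recognizability to recover the $2$-cutting, and so on. After $K \sim \log_\lambda(L/L_0)$ iterations, one has pinned down the $\tau$-hierarchy of $v$ at $K$ nested scales, from which primitivity forces $v = v'$.

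The main obstacle is that iterating bilateral recognizability alone does not by itself break letter ambiguity: if $\tau(a) = \tau(b)$ with $a \neq b$, then $\tau^k(a) = \tau^k(b)$ for every $k$, so no amount of iteration on $\tau$-images can distinguish $a$ from $b$ in isolation. The genuine mechanism must exploit that $v$ and $v'$ are factors of the \emph{same} fixed point $x$, at specific positions dictated by the $\tau$-block structure, so that any defect $v \neq v'$ with $\tau(v) = \tau(v')$ would propagate via the self-similar structure of $x$ into a symmetry incompatible with aperiodicity. Making this precise while tracking the window-size bounds is the essential technical difficulty; alternatively one can observe that the strict form is already contained in Mossé's argument in \cite{Mo96a} and cite it directly.
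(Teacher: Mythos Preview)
The paper gives no proof of this lemma at all: it is stated as a direct citation of Moss\'e's theorem, and the surrounding text explicitly says that the proof in \cite{Mo96a} is complicated and that only the injective uniform special case will be reproved (as Lemma~\ref{lemma:Recognizability}). So the paper's ``proof'' is simply the citation, and your closing fallback---citing \cite{Mo96a} directly for the strict form---is exactly what the paper does.

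Your attempted intermediate argument, upgrading bilateral to strict recognizability by iterated desubstitution, does not work as written, and you correctly diagnose why: when $\tau$ is not injective on letters, matching $\tau$-images over arbitrarily many levels never separates $a$ from $b$ with $\tau(a)=\tau(b)$. The vague appeal to ``symmetry incompatible with aperiodicity'' is not an argument; making it one is essentially the content of Moss\'e's paper. The paper itself acknowledges this in the footnote after Lemma~\ref{lemma:Recognizability}, pointing out that the implication recognizable $\Rightarrow$ strictly recognizable in the general primitive case is proved in \cite{Mo96a}. So your two-step decomposition (bilateral from Moss\'e, then upgrade) is reasonable in spirit, but the upgrade step is not elementary and must itself be cited rather than sketched.
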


\begin{lemma}
\label{lem:RecogToCA}
Let $\tau$ have a one-sided fixed point $x \in X_\tau$ in which every letter occurs, and let $x' \in X_\tau^\leftrightarrow$ be such that $x'_i = x_i$ for all $i \geq 0$. Then $\tau$ is recognizable if and only if there exists a block map $R^\circ = R^\circ_\tau : X_\tau^\leftrightarrow \to \{0,1\}^\Z$ such that for large enough $i \in \N$, $R^\circ(x')_i = 1$ iff $i \in E^x$. It is strictly recognizable if and only if there exists a block map $R = R_\tau : X_\tau^\leftrightarrow \to (\{\#\} \cup A)^\Z$ such that for large enough $i \in \N$, $R(x')_i \neq \#$ if and only if $i \in E^x$, and then $x_i$ comes from the letter $R(x')_i$ at $i$.
\end{lemma}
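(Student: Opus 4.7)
The plan is to extract the block map directly from a recognizability constant, and conversely to read such a constant off of a given block map. A preliminary observation used in both directions is that every factor of $X_\tau^\leftrightarrow$ is a factor of $x$: the common forbidden set defining $X_\tau$ and $X_\tau^\leftrightarrow$ consists of words not appearing in any $\tau^n(a)$, and since every letter $a$ occurs in the fixed point $x$ (so $\tau^n(a) \sqsubset \tau^n(x) = x$), every non-forbidden word occurs in $x$. This lets us transport questions about windows of $X_\tau^\leftrightarrow$ into questions about positions in $x$.

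For the forward direction, given a recognizability constant $L$, we define a radius-$L$ local function as follows: on input $w \in \B_{2L+1}(X_\tau^\leftrightarrow)$, pick any occurrence $w = x_{[p-L, p+L]}$ in $x$ (automatically $p \geq L$, since $p - L \geq 0$) and output $1$ iff $p \in E^x$. Recognizability applied with $n = 1$ says that this value is independent of the chosen occurrence, so the rule is well-defined; and for $i \geq L$, the window $x'_{[i-L, i+L]}$ equals $x_{[i-L, i+L]}$, so $R^\circ(x')_i = 1$ iff $i \in E^x$ by construction. The strictly recognizable case is analogous, with the local rule outputting the unique letter from which $x_p$ comes at $p$ when $p \in E^x$, and $\#$ otherwise; well-definedness is provided by strict recognizability.

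For the converse, suppose $R^\circ$ has radius $r$ and its value on $x'$ agrees with the indicator function of $E^x$ for all $i \geq M$. Set $L = \max(r, M)$. If $x_{[i-L, i+n+L-1]} = x_{[j-L, j+n+L-1]}$, then for each $k \in [0, n-1]$ the radius-$r$ windows around $i+k$ and $j+k$ coincide, so $R^\circ(x')_{i+k} = R^\circ(x')_{j+k}$; since both positions exceed $M$, this reads $i+k \in E^x$ iff $j+k \in E^x$, which is precisely the $1$-cutting condition. The strict version adds that equality of $R$-values at positions in $E^x$ forces $x_{i+k}$ and $x_{j+k}$ to come from the same letter, which is exactly the condition that $w$ comes from the same word at $i$ and $j$.

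The only delicate point is the well-definedness of the forward-direction local rule on all factors of $X_\tau^\leftrightarrow$, which follows from the preliminary observation together with recognizability applied with $n = 1$; once this is in place, both equivalences reduce to bookkeeping around the radii and the "large enough" thresholds on either side.
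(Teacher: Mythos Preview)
Your approach matches the paper's: build the local rule from the recognizability constant in one direction, and extract a constant from the block map's radius in the other. The forward direction and the non-strict converse are correct as written.

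There is a small gap in your converse for strict recognizability. With $L = \max(r,M)$ you only match $R$-values at positions $i+k$ for $k \in [0,n-1]$, which recovers the letters of $v$ corresponding to cutting points \emph{inside} $[i,i+n-1]$. But when $i \notin E^x$, the first letter of the word $v$ that $w$ comes from sits at the cutting point $E_x(p) < i$, potentially as far as $\max_{a} |\tau(a)| - 1$ positions to the left of $i$, and your window need not reach far enough to determine $R(x')$ there. So ``equality of $R$-values at positions in $E^x$'' inside $[i,i+n-1]$ is not quite ``exactly the condition that $w$ comes from the same word.'' The paper handles this by taking $L = r + \max_{a} |\tau(a)|$; enlarging your $L$ by $\max_{a}|\tau(a)|$ fixes the issue.
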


\begin{proof}
Suppose $\tau$ is recognizable, let $L \in \N$ be as in the definition of recognizability, and set $L$ as the radius of $R^\circ$. Then for all $w \sqsubset x$ with $|w| = 2L+1$, either $i \in E^x$ whenever $w = x_{[i-L,i+L]}$, or $i \notin E^x$ for all such $i$. In the first case, the local rule of $R^\circ$ outputs $1$ on input $w$, and in the second, $0$. If $\tau$ is also strictly recognizable with the same $L$, then $x_i$ always comes from the same letter $a \in A$ at $i$ when $w = x_{[i-L,i+L]}$. If the local rule of $R^\circ$ outputs $0$ on input $w$, then $R$ outputs $\#$; otherwise, it outputs the above $a$.

Conversely, if $r$ is the radius of $R^\circ$ ($R$, respectively), then one can take $r + \max\{|\tau(a)| \;|\; a \in A\}$ as $L$ in the definition of recognizability (strict recognizability, respectively). \qed
\end{proof}

Another important result is Theorem 2.4 of \cite{Mo92}, which we give as a lemma below.

\begin{lemma}[Theorem 2.4 of \cite{Mo92}]
\label{lem:MossePower}
Let $\tau$ be a primitive aperiodic substitution on $A$. Then there exists $N \in \N$ such that $w^N \not\sqsubset X_\tau$ for all $w \in A^+$.
\end{lemma}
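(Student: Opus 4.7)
My plan is to deduce the lemma from two facts already available in the paper, bypassing any direct appeal to strict recognizability (Lemma~\ref{lem:Mosse}): the linear recurrence of $X_\tau$, stated in the paragraph just before Lemma~\ref{lem:Mosse} as a well-known consequence of primitivity, and the classical Morse--Hedlund theorem, which asserts that $|\B_n(X)| \geq n+1$ for every $n \geq 1$ when $X$ is a minimal aperiodic subshift.

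Concretely, I would fix a linear recurrence constant $C \in \N$ for $X_\tau$, so that $u \sqsubset v$ whenever $u, v \sqsubset X_\tau$ and $|v| \geq C|u|$, and then claim that $N := C+1$ suffices. To argue by contradiction, suppose $w^N \sqsubset X_\tau$ for some $w \in A^+$ of length $m$. Then $w^N$ is a factor of $X_\tau$ of length $(C+1)m \geq Cm$, so by linear recurrence every element of $\B_m(X_\tau)$ occurs as a factor of $w^N$; that is,
\[ \B_m(X_\tau) \subseteq \B_m(w^N). \]

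The next step is a short counting observation: because $w^N$ has period $m$, every one of its length-$m$ factors is a cyclic conjugate of $w$, so $|\B_m(w^N)| \leq m$ and therefore $|\B_m(X_\tau)| \leq m$. This contradicts the Morse--Hedlund theorem applied to $X_\tau$: primitivity makes $X_\tau$ minimal, aperiodicity prevents any $\sigma$-periodic point from lying in $X_\tau$, so every $x \in X_\tau$ is aperiodic and hence $|\B_m(X_\tau)| \geq m+1$ for all $m \geq 1$. I do not foresee any substantive obstacle; the main thing to check carefully is that the paper's formulation of linear recurrence translates directly to the inclusion $\B_m(X_\tau) \subseteq \B_m(w^N)$, and that the edge case $m = 1$ is handled (there the argument reduces to the observation that aperiodicity forces $|\B_1(X_\tau)| \geq 2 > 1 = |\B_1(w^N)|$).
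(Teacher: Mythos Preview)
The paper does not supply a proof of this lemma: it is quoted verbatim as Theorem~2.4 of \cite{Mo92}, and in Section~\ref{sec:UniformCase} the authors explicitly say they use it without proof, pointing to \cite{Mo92} and to \cite{DuHoSk99}, Section~7.1, ``for a nice proof through linear recurrence.'' So there is nothing to compare against except those references.

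Your argument is correct, and it is precisely the linear-recurrence route the paper alludes to via \cite{DuHoSk99}. The ingredients you use are all legitimately available: the paper states that the subshift of a primitive substitution is linearly recurrent, and that aperiodicity plus primitivity makes $X_\tau$ an infinite minimal subshift, so the Morse--Hedlund bound $|\B_m(X_\tau)| \geq m+1$ applies. The step $\B_m(X_\tau) \subseteq \B_m(w^{C+1})$ follows from the paper's definition of linear recurrence (since $|w^{C+1}| = (C+1)m \geq Cm$), and the bound $|\B_m(w^{C+1})| \leq m$ from periodicity is immediate. Compared with Moss\'e's original argument in \cite{Mo92}, which works directly with the substitution and recognizability, your proof is shorter and more conceptual but imports linear recurrence as a black box; the trade-off is exactly the one the authors flag when they cite \cite{DuHoSk99}.
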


In the above case, we say $X_\tau$ \emph{has bounded powers}. We also need the following simple analytical lemma.

\begin{lemma}
\label{lem:SublinearContribution}
There exists a function $\alpha : [0, 1) \times \R \to \R$ such that for any sequence of real numbers $(x_i)_{i \in \N}$ such that $x_{i+1} \leq a x_i + b$ for all $i$, we have $x_i \leq \alpha(a, b)$ for large enough $i$.
\end{lemma}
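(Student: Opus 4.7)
The plan is to exploit the fact that the affine recurrence $y \mapsto ay + b$ is a contraction when $a \in [0,1)$, with unique fixed point $y^{*} = b/(1-a)$. I would define
\[
\alpha(a,b) = \frac{b}{1-a} + 1
\]
(any strict upper bound on $y^{*}$ would do), and argue that every sequence satisfying the hypothesis eventually lies below $\alpha(a,b)$.

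The key computation is to subtract $y^{*}$ from both sides of the inequality and use $b = (1-a)y^{*}$ to obtain
\[
x_{i+1} - y^{*} \;\leq\; a(x_i - y^{*}).
\]
From here I would split into two cases. If $x_i \leq y^{*}$ for some $i$, then the right-hand side is $\leq 0$ (since $a \geq 0$), so $x_{j} \leq y^{*} < \alpha(a,b)$ for all $j \geq i+1$, and we are done. Otherwise $x_i > y^{*}$ for every $i$, in which case iterating the inequality gives $0 < x_i - y^{*} \leq a^{i}(x_0 - y^{*})$, and since $a^{i} \to 0$, eventually $x_i - y^{*} \leq 1$, i.e.\ $x_i \leq \alpha(a,b)$.

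I expect no real obstacle; the content is the standard fact that an affine contraction on $\R$ drives every orbit into any neighbourhood of its fixed point. The one thing worth pointing out is that the bound does not depend on $x_0$ precisely because the statement only asks for $x_i \leq \alpha(a,b)$ for large enough $i$: the dependence on the initial value is absorbed into how long one must wait.
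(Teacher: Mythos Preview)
Your argument is correct: subtracting the fixed point $y^{*}=b/(1-a)$ turns the hypothesis into $x_{i+1}-y^{*}\le a(x_i-y^{*})$, and the two-case analysis (either the sequence drops below $y^{*}$ at some point and stays there, or it remains above and is squeezed down geometrically) is clean and complete. The choice $\alpha(a,b)=y^{*}+1$ works, and you are right that the dependence on $x_0$ is absorbed into the waiting time rather than the bound.

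As for comparison: the paper does not actually prove this lemma. It is stated as a ``simple analytical lemma'' and left without proof, so your write-up supplies what the paper omits. There is nothing to compare on the level of strategy; your approach is the standard one and exactly what the authors presumably had in mind.
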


\section{The Special Case of Injective Uniform Substitutions}
\label{sec:UniformCase}

As the special case of endomorphisms of an injective uniform primitive substitution is easy to establish, we begin with a relatively self-contained treatment of it. We use Lemma~\ref{lem:MossePower} without a proof, as the proof in \cite{Mo92} is easy to read, and the result is not essentially simpler to prove in the uniform case (see also \cite{DuHoSk99}, Section~7.1 for a nice proof through linear recurrence). As the proof of Lemma~\ref{lem:Mosse} is quite complicated, we give a simpler proof of this in the injective uniform case in Lemma~\ref{lemma:Recognizability}. This section is mainly pedagogical, and its aim is to explain our proof technique: the main result of this section, Proposition~\ref{prop:UniformCase}, follows both from the results of \cite{HoPa89}, and from those in Section~\ref{sec:Finite}, which generalize it in different directions. If the reader is only interested in the proof of the main result, this section can thus be skipped. %The properties of uniform substitutions that we use are well-known, and most of them can be generalized to the non-uniform case. We will use these generalizations, referring to other sources for their proofs, in later sections. This section serves mainly as an illustration of the idea of repeated conjugation.

For the rest of this section, fix an alphabet $A$, an injective primitive uniform aperiodic substitution $\tau$ on $A$, so that $|\tau(a)| = m$ and $\tau(a) \neq \tau(b)$ for all $a, b \in A$, and suppose $\tau$ has a fixed point $x \in X_\tau$. For example, the Thue-Morse substitution $\tau(a) = ab, \tau(b) = ba$ satisfies the assumptions. Only primitivity, aperiodicity and uniformity are essential for our proof technique, and injectivity simplifies our proof. Ensuring the existence of a fixed point on the other hand is a matter of taking a power of $\tau$.

We now restate, and prove, Lemma~\ref{lem:Mosse} for the injective uniform case, as this is substantially easier than the general primitive case, which is proved in \cite{Mo96a}. In the following, an $h \bmod m$-factor of a word $u$ is another word $v$ such that $v = u_{[i,i+|v|-1]}$ for some $i$ with $i \equiv h \bmod m$.

\begin{lemma}
\label{lemma:AConfusedShift}
Suppose that for all $h \in [1, m-1]$, there exists $w_h \in A^+$ which is a $0 \bmod m$-factor, but not an $h \bmod m$-factor, of $x$. Then $\tau$ is recognizable.
\end{lemma}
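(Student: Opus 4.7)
The plan is to exploit the fixed-point equation $\tau(x)=x$ together with uniform recurrence of $X_\tau$ to force each $w_h$ to appear as a $0 \bmod m$-subfactor of every sufficiently long $0 \bmod m$-factor of $x$. Once this is arranged, any window equality $x_{[i-L,i+L]} = x_{[j-L,j+L]}$ with $i \not\equiv j \pmod m$ will transport such a controlled occurrence of $w_h$ to a forbidden $h \bmod m$-position, contradicting the hypothesis.

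First, I would clean up each $w_h$. Pick an occurrence of $w_h$ at a $0 \bmod m$-position $km$ of $x$, and replace $w_h$ by the prefix $x_{[km, km + mk_h - 1]}$, where $mk_h$ is the least multiple of $m$ at least $|w_h|$. The extension is still a $0 \bmod m$-factor of $x$, and it is still not an $h \bmod m$-factor, because any $h \bmod m$-occurrence of the extension would restrict to an $h \bmod m$-occurrence of the original $w_h$. So I may assume $|w_h| = m k_h$ is a multiple of $m$. Since $x = \tau(x)$ and $\tau$ is uniform of length $m$, writing $w_h = x_{[km, km + m k_h - 1]} = \tau(x_{[k, k + k_h - 1]})$ gives $w_h = \tau(v_h)$ for some $v_h \sqsubset x$ of length $k_h$.

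Primitivity implies uniform recurrence of $X_\tau$, so there exists $N$ such that every factor of $x$ of length $\geq N$ contains each of $v_1, \ldots, v_{m-1}$. Applying $\tau$, every $0 \bmod m$-factor of $x$ of length $\geq mN$ equals $\tau(v)$ for some $v \sqsubset x$ of length $\geq N$, and thus contains each $w_h = \tau(v_h)$ at an internal position of the form $mp$, i.e., at a position of $x$ congruent to $0 \bmod m$. Pick $L \geq m(N+1)$, so that for any $i$, the window $x_{[i-L, i+L]}$ contains a $0 \bmod m$-factor of $x$ of length $\geq mN$ starting at the first multiple of $m$ after $i - L$. Now suppose $x_{[i-L, i+L]} = x_{[j-L, j+L]}$ with $i \not\equiv j \pmod m$, and set $h = (j-i) \bmod m \in [1, m-1]$. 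By the preceding remark there is an internal offset $k^*$ of the window with $i - L + k^* \equiv 0 \pmod m$ such that $x_{[i-L+k^*, i-L+k^*+|w_h|-1]} = w_h$. Using the window equality, this same factor equals $x_{[j-L+k^*, j-L+k^*+|w_h|-1]}$, placing $w_h$ at position $j - L + k^* \equiv (j-i) \equiv h \pmod m$ in $x$, contradicting the hypothesis. Hence window equality forces $i \equiv j \pmod m$, which is equivalent to the same $1$-cutting condition in this uniform case (for $n \geq m$ the condition collapses to a residue equality, and for smaller $n$ it is implied by it), giving recognizability.

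The only delicate point is the passage from uniform recurrence of the $v_h$ in $X_\tau$ to the stronger statement that every long $0 \bmod m$-factor of $x$ contains each $w_h$ at a $0 \bmod m$-subposition; this is pure bookkeeping once the fixed-point equation and uniformity of $\tau$ are in play.
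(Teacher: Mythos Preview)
Your argument is correct, but it takes a longer route than the paper's. The paper avoids your rectification step (extending each $w_h$ to a $\tau$-image $\tau(v_h)$ and then pushing uniform recurrence of the $v_h$ through $\tau$) by simply amalgamating all the $w_h$ into a \emph{single} marker word: take a prefix $w$ of $x$ long enough that every $w_h$ already sits inside it at a $0 \bmod m$-offset. Then $w$ itself cannot occur at any nonzero residue (else some $w_h$ would), and uniform recurrence of $w$ alone gives an $L$ such that $i \bmod m$ is read off from $x_{[i,i+L-1]}$ by locating the (necessarily $0 \bmod m$) occurrence of $w$ in that window. This is shorter, needs no two-sided window, and in fact yields the slightly stronger ``recognizability from the right'' noted after the next lemma. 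Your approach, by contrast, makes the substitution structure more visible (everything is a $\tau$-image of something) at the cost of extra bookkeeping; both reach the same conclusion.
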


\begin{proof}
Let $w \in A^*$ be a prefix of $x$ where every $w_h$ occurs as a $0 \bmod m$-factor. Then $w$ cannot occur in $x$ as an $h \bmod m$-factor for any $h \in [1, m-1]$, for otherwise $w_h$ would also be an $h \bmod m$-factor of $x$. Since $x$ is uniformly recurrent, there exists $L \in \N$ such that every length-$L$ factor of $x$ contains an occurrence of $w$. Now, for all $i \in \N$ we have $i \in E^x = \{0, m, 2m, \ldots\}$ if and only if the word $w$ is a $0 \bmod m$-factor of $x_{[i,i+L-1]}$. This proves the recognizability of $\tau$.
\qed
\end{proof}

\begin{lemma}
\label{lemma:Recognizability}
The substitution $\tau$ is recognizable.
\end{lemma}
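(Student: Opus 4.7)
By Lemma~\ref{lemma:AConfusedShift}, it suffices to produce, for each $h \in [1, m-1]$, a word $w_h \in A^+$ occurring in $x$ at some $0 \bmod m$-position but at no $h \bmod m$-position. I argue by contradiction: fix such an $h$ and suppose every $0 \bmod m$-factor of $x$ is also an $h \bmod m$-factor. Applying the assumption to the $0 \bmod m$-factor $\tau(x_0 \cdots x_{k-1}) = x_{[0, km-1]}$, and matching its natural $\tau$-block decomposition against the decomposition inherited from the claimed $h \bmod m$-occurrence at some position $N_k m + h$, one obtains for $0 \leq j \leq k-1$ the identity $\tau(x_j) = q_{x_{N_k+j}} p_{x_{N_k+j+1}}$, where $p_a = \tau(a)_{[0, h-1]}$ and $q_a = \tau(a)_{[h, m-1]}$ denote the length-$h$ prefix and length-$(m-h)$ suffix of $\tau(a)$. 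Passing to a subsequential limit of $\sigma^{N_k}(x)$ by compactness of $X_\tau$ yields some $y \in X_\tau$ satisfying $\tau(x_j) = q_{y_j} p_{y_{j+1}}$ for every $j \geq 0$.

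Every pair $(y_j, y_{j+1})$ thus lies in $V = \{(a, b) \in A^2 : q_a p_b \in \tau(A)\}$, so $y$ belongs to the SFT $Y$ defined by forbidding length-$2$ patterns outside $V$. Since $X_\tau \cap Y$ contains $y$ and $X_\tau$ is minimal, $X_\tau \subseteq Y$, so every consecutive pair in $x$ itself lies in $V$. Defining $z_j \in A$ as the unique letter with $\tau(z_j) = q_{x_j} p_{x_{j+1}}$ (well-defined by injectivity of $\tau$), one obtains the identity $\sigma^h(x) = \tau(z)$ in $A^{\mathbb{N}}$. Iterating this, together with the easily verified commutation $\tau \circ \sigma = \sigma^m \circ \tau$, yields $\sigma^{hm^{n-1}}(x) = \tau^n(z)$ for every $n \geq 1$. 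Moreover, the radius-$1$ sliding block code $G\colon y' \mapsto (\tau^{-1}(q_{y'_j} p_{y'_{j+1}}))_{j \geq 0}$ satisfies $\tau \circ G = \sigma^h \circ \tau$, and injectivity of $\tau$ gives the functional equation $G^m = \sigma^h$ on $X_\tau$.

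The final step is to reach a contradiction with the bounded-powers property (Lemma~\ref{lem:MossePower}). Since $x = \tau^n(x)$, the identity $\sigma^{hm^{n-1}}(x) = \tau^n(z)$ forces $x$ to admit two distinct $\tau^n$-block parsings: the natural one at offset $0$, giving the blocks $\tau^n(x_0), \tau^n(x_1), \ldots$, and the shifted one at offset $hm^{n-1}$, giving $\tau^n(z_0), \tau^n(z_1), \ldots$. Comparing long overlapping prefixes of these two parsings and using injectivity of $\tau^n$, one extracts a long factor of $x$ with period $hm^{n-1}$; taking $n$ and the prefix length sufficiently large produces a factor of the form $u^N$ for some nonempty $u$, contradicting Lemma~\ref{lem:MossePower}. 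The main obstacle I anticipate is making this combinatorial extraction fully rigorous when $z_0 \neq x_0$ (the case $z_0 = x_0$ gives the period $hm^{n-1}$ on a prefix of length $m^n + hm^{n-1}$ almost immediately); here one must iterate the construction by applying $G$ and exploit the equation $G^m = \sigma^h$, which ensures that the orbit of $x$ under $G$ is not trapped in any periodic pattern and so eventually manifests a forbidden high power.
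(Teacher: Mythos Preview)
Your setup through $\sigma^{hm^{n-1}}(x) = \tau^n(z)$ is correct: the compactness argument, the minimality step $X_\tau \subseteq Y$, and the iteration via $\tau^{n-1}(x)=x$ all work as written. The gap is in the final paragraph, and it is real.

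From the two $\tau^n$-parsings of $x$ at offsets $0$ and $hm^{n-1}$ you learn that $\tau^n(x_0)$ and $\tau^n(z_0)$ overlap in $x$; even in the favourable case $x_0 = z_0$ this yields period $hm^{n-1}$ only on the window $[0,\, m^n + hm^{n-1}-1]$, hence at most $1 + m/h$ repetitions --- a bound \emph{independent of $n$}. Extending the window would require $x_j = z_j$ for many consecutive $j$, which nothing in your argument guarantees. Your fallback through $G^m = \sigma^h$ does not close the gap either: first, iterating $G$ on $X_\tau$ presupposes $G(X_\tau) \subseteq X_\tau$, which you have not checked (it amounts to ``$\tau(w)\in X_\tau \Rightarrow w\in X_\tau$'', not obvious here); second, even granting that relation, the bare existence of a block map $G$ with $G^m = \sigma^h$ on a minimal aperiodic subshift is not in itself a contradiction with bounded powers. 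Your phrase ``eventually manifests a forbidden high power'' is exactly where the missing combinatorics would have to go, and no mechanism is supplied.

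The paper's proof is quite different and purely finitary. Fix any length-$2$ word $w^{(0)} \sqsubset x$, set $w^{(i)} = \tau(w^{(i-1)})$, and let $N_i \subset [0, m^i-1]$ be the set of offsets at which some full block $\tau^i(a)$ sits inside $w^{(i)}$. One shows $|N_i| \geq i+1$ by induction: applying $\tau$ turns each $n \in N_{i-1}$ into $mn \in N_i$, and since $w^{(i)}$ is itself a $0 \bmod m$-factor of $x$, the standing hypothesis makes it also an $h \bmod m$-factor, contributing one further element of $N_i$ not divisible by $m$. Once $|N_i| \geq (K+1)|A|$, pigeonhole on the letter $a$ gives two occurrences of the same $\tau^i(a)$ within distance $m^i/K$, hence a $K$th power and a contradiction with Lemma~\ref{lem:MossePower}. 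This linear growth of $|N_i|$ is precisely the quantitative ingredient your two-parsing picture lacks; your construction essentially establishes $|N_i|\geq 2$ and stops.
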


\begin{proof}
Suppose on the contrary that $\tau$ is not recognizable. By Lemma~\ref{lemma:AConfusedShift}, every $0 \bmod m$-factor of $x$ is then also an $h \bmod m$-factor of $x$. Let $w^{(0)} \sqsubset x$ with $|w^{(0)}| = 2$ be arbitrary. For all $i \geq 1$, define $w^{(i)} = \tau(w^{(i-1)})$. Denote by $N_i \subset [0, m^i-1]$ the set of coordinates $n$ such that $w^{(i)}_{[n,n+m^i-1]} = \tau^i(a)$ for some $a \in A$.

We now claim that $|N_i| \geq i + 1$. For $i = 0$, this is clear, so suppose that $i \geq 1$, so that by the induction hypothesis we have $|N_{i-1}| \geq i$. By the definition of $\tau$, for each $n \in N_{i-1}$, we have $(w^{(i)})_{[mn,mn+m^i-1]} = \tau^i(a)$ for some $a \in A$, hence $mn \in N_i$. Since $w^{(i)}$ is the $\tau$-image of a factor of $x$, it is a $0 \bmod m$-factor of $x$. But then it is also an $h \bmod m$-factor, and since $|w^{(i)}| = 2m^i$, some $\tau^i(a)$ is a $(-h) \bmod m$-factor of $w^{(i)}$. Thus $km-h \in N_i$ for some $k \geq 1$, and since this number is not divisible by $m$, it is not the position of one of the factors introduced previously, so $|N_i| \geq i + 1$.

Let $K \geq 2$ be arbitrary. We show that $u^K \sqsubset x$ for some $u \in A^+$, contradicting Lemma~\ref{lem:MossePower}. Let $i = (K+1) |A|-1$, and consider the word $w^{(i)}$. Denote
\[ I_a = \{ n \in [0,m^i-1] \;|\; w^{(i)}_{[n,n+m^i-1]} = \tau^i(a) \} \]
for each $a \in A$. By the above, we have $\left| \bigcup_{a \in A} I_a \right| \geq (K + 1)|A|$, implying that $|I_a| \geq K+1$ for some $a \in A$. Then there exist $n_1,n_2 \in I_a$ with $0 < k = |n_1 - n_2| \leq \frac{m^i}{K}$. But then $\tau^i(a)_{[0, m^i - k - 1]} = \tau^i(a)_{[k, m^i - 1]}$, thus $\tau^i(a)$ is periodic with period $k$, and then $\tau^i(a)_{[0, k - 1]}^K \sqsubset x$, a contradiction.
\qed
\end{proof}

In fact, the proof of Lemma~\ref{lemma:Recognizability} shows something slightly stronger: that $\tau$ is `recognizable from the right', in the sense that we only need to look at $w_{[i, i+L]}$, and not $w_{[i-L, i+L]}$, to determine whether the coordinate $i$ is in $E^x$. This property is not shared by primitive substitutions in general.

Since $\tau$ is injective, recognizability of course means that it is also strictly recognizable.\footnote{A simple proof of this implication in the general primitive case is given in \cite{Mo96a}.} Let $R : X_\tau^\leftrightarrow \to \mathcal{O}(\INF (A \#^{m-1})) \INF)$ be the block map given for $\tau$ by Lemma~\ref{lem:RecogToCA}. %To access the image of $R$, we define the map $\pi_m : (A \cup \{\#\})^\Z \to (A \cup \{\#\})^\Z$ by $\pi_m(x)_j = x_{jm}$.
Using recognizability, it is easy to see that $X_\tau^\leftrightarrow$ is a disjoint union $\bigcup_{p = 0}^{m - 1} X^p$ where $X^0 = \tau(X_\tau^\leftrightarrow)$ and $X^p = \sigma^p(X^0)$ for all $p \neq 0$. For a configuration $y \in X_\tau^\leftrightarrow$, the unique $p \in [0,m-1]$ such that $y \in X^p$ is called the \emph{period class} of $y$.

\begin{lemma}
\label{lem:PeriodClasses}
Let $f : X_\tau^\leftrightarrow \to X_\tau^\leftrightarrow$ be a block map. Then $f(X^0) \subset X^p$ for some $p \in [0,m-1]$.
\end{lemma}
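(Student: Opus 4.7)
The plan is to realize the period-class assignment, restricted to $X^0$, as a continuous $\sigma^m$-invariant map into a finite set on a minimal dynamical system, from which constancy is automatic.

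First I would verify that $\tau$ restricts to a homeomorphism from $X_\tau^\leftrightarrow$ onto $X^0$ that conjugates $\sigma$ to $\sigma^m$. Continuity is immediate, and injectivity uses only that $\tau$ has constant length $m$ and is letter-injective: from $\tau(y) = \tau(z)$, aligning in blocks of length $m$ gives $\tau(y_n) = \tau(z_n)$, hence $y_n = z_n$, for every $n \in \Z$. Since $X_\tau^\leftrightarrow$ is compact and $X^0$ Hausdorff, this continuous bijection is a homeomorphism, and $\tau \circ \sigma = \sigma^m \circ \tau$ holds by uniformness. Because $\tau$ is primitive and aperiodic, $(X_\tau^\leftrightarrow, \sigma)$ is minimal, and hence so is the conjugate system $(X^0, \sigma^m)$.

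Next I would observe that the period class $p : X_\tau^\leftrightarrow \to \{0, 1, \ldots, m-1\}$ is continuous: by Lemma~\ref{lemma:Recognizability} and Lemma~\ref{lem:RecogToCA}, the block map $R^\circ$ marks exactly the $\tau$-cut coordinates on any $y \in X_\tau^\leftrightarrow$, and for $y = \sigma^p(\tau(z)) \in X^p$ these cut positions lie at coordinates congruent to $-p \bmod m$, so observing the position of a single cut inside any bounded window recovers $p(y)$. Moreover each $X^q$ is $\sigma^m$-invariant, since $\sigma^m(X^q) = \sigma^q \tau(\sigma(X_\tau^\leftrightarrow)) = \sigma^q \tau(X_\tau^\leftrightarrow) = X^q$.

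Putting the pieces together, the composition $g = p \circ f : X^0 \to \{0, \ldots, m-1\}$ is continuous, and for $y \in X^0$ satisfies $g(\sigma^m y) = p(\sigma^m f(y)) = p(f(y)) = g(y)$, so it is $\sigma^m$-invariant. Each preimage $g^{-1}(q)$ is then a clopen $\sigma^m$-invariant subset of $X^0$, which by minimality of $(X^0, \sigma^m)$ must be empty or all of $X^0$. Hence $g$ is constant with some value $p^*$, yielding $f(X^0) \subset X^{p^*}$. The only subtle ingredient is the minimality of $(X^0, \sigma^m)$, which comes down to $\tau$ being a homeomorphism; this is painless in our uniform, letter-injective setting, though in the general primitive case it would require the full strength of strict recognizability.
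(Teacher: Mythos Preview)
Your argument is correct, but it takes a different route from the paper's. The paper proceeds by a direct combinatorial comparison: for $y, y' \in X^0$ it picks a word $w$ long enough that both $R(w)$ and $R(f(w))$ contain a non-$\#$ symbol, uses uniform recurrence to find $w$ at positions $i$ in $y$ and $j$ in $y'$, observes that the non-$\#$ in $R(w)$ forces $i \equiv j \pmod m$, and then observes that the non-$\#$ in $R(f(w))$ forces $f(y)$ and $f(y')$ to share the same period class. You instead repackage the same ingredients dynamically: you note that $\tau$ conjugates $(X_\tau^\leftrightarrow, \sigma)$ to $(X^0, \sigma^m)$, so the latter is minimal, and that the period-class function $p$ is continuous and $\sigma^m$-invariant; then $p \circ f$ restricted to $X^0$ is a continuous $\sigma^m$-invariant map to a finite set on a minimal system, hence constant. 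Both arguments rest on exactly the same two facts---recognizability makes $p$ locally determined, and minimality (equivalently uniform recurrence) forces agreement---but the paper unpacks minimality into an explicit ``same word, two positions'' comparison while you invoke the standard ``continuous invariant on a minimal system is constant'' lemma. Your route is slightly more structural and makes clear why injectivity of $\tau$ (to get $\tau$ a homeomorphism onto $X^0$) matters; the paper's route is shorter and needs nothing beyond the block map $R$ and uniform recurrence.
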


\begin{proof}
Let $y, y' \in X^0$, and let $w \sqsubset X_\tau^\leftrightarrow$ be long enough that $|R(f(w))| \geq m$ (and thus also $|R(w)| \geq m$). We have $y_{[i, i+|w|-1]} = w = y'_{[j, j+|w|-1]}$ for some $i, j \in \Z$ by uniform recurrence of $X_\tau^\leftrightarrow$. Since $|R(w)| \geq m$, the word $R(w)$ contains a letter different from $\#$, and then necessarily $j - i \equiv 0 \mod m$. We also have $f(y)_{[i+r, i+r+|f(w)|-1]} = f(w) = f(y')_{[j+r, j+r+|f(w)|-1]}$, where $r \in \N$ is the radius of $f$. Since $|R(f(w))| \geq m$, the word $R(f(w))$ also contains a letter different from $\#$, and then $f(y)$ and $f(y')$ must have the same period class. \qed
\end{proof}

We can now prove the main result of this section, the following special case of Theorem~\ref{thm:FinitelyManyCA}.

%If $f(X^0) \subset X^p$, then of course $\sigma^{-p}(f(X^0)) \subset X^0$.

\begin{proposition}
\label{prop:UniformCase}
There exists a finite set $P$ of block maps on $X_\tau^\leftrightarrow$ such that if $f : X_\tau^\leftrightarrow \to X_\tau^\leftrightarrow$ is a block map, then $f = \sigma^k \circ f'$ for some $k \in \Z$ and $f' \in P$.
\end{proposition}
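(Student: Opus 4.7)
The plan is to implement the strategy sketched in the introduction: define a conjugation $\Phi$ on block maps using $\tau$ and a block-map inverse, show that $\Phi$ strictly shrinks the radius above some threshold and is injective on shift-classes, and then pigeonhole.

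First I would produce, from the strict recognizability of $\tau$ (which holds by Lemma~\ref{lemma:Recognizability} combined with injectivity), a block map $\tau^{-1} : X^0 \to X_\tau^\leftrightarrow$ satisfying $\tau^{-1} \circ \tau = \ID_{X_\tau^\leftrightarrow}$ and $\tau \circ \tau^{-1} = \ID_{X^0}$. Concretely, the block map $R$ from Lemma~\ref{lem:RecogToCA} marks the starts of $\tau$-blocks in any $y \in X^0$; one reads off the preimage letter from the $m$ consecutive symbols beginning at a marked position, injectivity guaranteeing that this rule is well defined. For a block map $f : X_\tau^\leftrightarrow \to X_\tau^\leftrightarrow$, Lemma~\ref{lem:PeriodClasses} yields a unique $p = p(f) \in [0,m-1]$ with $f(X^0) \subset X^p$, so $\sigma^{-p} \circ f \circ \tau$ has image in $X^0 = \tau(X_\tau^\leftrightarrow)$, and
\[ \Phi(f) = \tau^{-1} \circ \sigma^{-p} \circ f \circ \tau \]
is a well-defined block map on $X_\tau^\leftrightarrow$. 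Tracking how many source coordinates each factor of $\Phi(f)$ consumes per output coordinate (noting that $\tau$ inflates indices by $m$ while $\tau^{-1}$ deflates them by $m$), one obtains a bound of the form ``radius of $\Phi(f)$ is at most $(r + s + O(m))/m$'' when $f$ has radius $r$ and $\tau^{-1}$ has radius $s$. In particular there is a threshold $R_0$, depending only on $\tau$, such that whenever the radius of $f$ exceeds $R_0$, the radius of $\Phi(f)$ is strictly smaller.

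Next I would verify that $\Phi$ is injective on shift-classes $[f] = \{\sigma^k \circ f \;|\; k \in \Z\}$. If $\Phi(f') = \sigma^k \circ \Phi(f)$, then post-composing with $\tau$, using $\tau \circ \tau^{-1} = \ID_{X^0}$ on the images of $\sigma^{-p(f)} \circ f \circ \tau$ and $\sigma^{-p(f')} \circ f' \circ \tau$, and commuting $\tau$ past the shift via $\tau \circ \sigma = \sigma^m \circ \tau$, one obtains that $f$ and $f'$ agree up to a single shift on $X^0 = \tau(X_\tau^\leftrightarrow)$; shift-commutativity of both maps then extends this to all of $X_\tau^\leftrightarrow = \bigcup_{p=0}^{m-1} X^p$, giving $[f] = [f']$.

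Combining the two ingredients, I would argue as follows. For any block map $f$, the contraction bound iterated finitely many times drives $\Phi^n(f)$ into the (finite) set of shift-classes having a representative of radius at most $R_0$, so the orbit $([\Phi^n(f)])_{n \geq 0}$ is eventually periodic. Injectivity of $\Phi$ on classes upgrades this to periodicity from the start, so $[f]$ is itself a $\Phi$-periodic class. But if the minimum radius within $[f]$ exceeded $R_0$, a strict radius decrease somewhere along one full period would give a contradiction. Hence every $[f]$ contains a representative of radius at most $R_0$, and the finite set $P$ of all block maps of radius at most $R_0$ fulfills the statement. The main obstacle I anticipate is a careful accounting of the radius estimate through the four-factor composition defining $\Phi$, with constants uniform in $f$ and $p(f)$; the remainder of the argument is a formal pigeonhole once that inequality is nailed down.
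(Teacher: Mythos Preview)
Your proposal is correct and follows essentially the same approach as the paper: conjugate by $\tau$ (after a shift normalization via Lemma~\ref{lem:PeriodClasses}), obtain a radius bound of the form $r \mapsto r/m + O(1)$, show the conjugation is injective on shift-classes, and then pigeonhole on the finitely many small-radius maps. The only cosmetic differences are that the paper invokes the abstract homeomorphism $\tau : X_\tau^\leftrightarrow \to X^0$ rather than building $\tau^{-1}$ explicitly as a block map, and phrases the radius contraction via Lemma~\ref{lem:SublinearContribution} rather than a strict-decrease-above-threshold argument; your final ``contradiction along one period'' sentence would be cleaner stated as ``iterating $\Phi$ eventually lands a representative of radius $\leq R_0$ inside $[f]$'', but the content is the same.
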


\begin{proof}
We inductively define a sequence $(f_i)_{i \in \N}$ of block maps on $X_\tau^\leftrightarrow$ as follows. Let $p \in [0,m-1]$ be such that $\sigma^p(f(X^0)) \subset X^0$ (it exists by Lemma~\ref{lem:PeriodClasses}), and define $f_0 = \sigma^p \circ f$. Let then $i \in \N$, and suppose the block map $f_i : X_\tau^\leftrightarrow \to X_\tau^\leftrightarrow$ is defined and $f_i(X^0) \subset X^0$. We will `conjugate' $f_i$ by $\tau$ to obtain $f_{i+1}$ with a (hopefully) smaller radius. Since $\tau$ is a homeomorphism from $X_\tau^\leftrightarrow$ to $X_0$ by injectivity, there exists a unique function $C_\tau(f_i) : X_\tau^\leftrightarrow \to X_\tau^\leftrightarrow$ satisfying $\tau \circ C_\tau(f_i) = f_i \circ \tau$. It is also shift-invariant and continuous, and thus a block map. We again let $f_{i+1} = \sigma^q \circ C_\tau(f_i)$ for the $q \in [0,m-1]$ such that $f_{i+1}(X^0) \subset X^0$.

%\[ f'_{i+1} = C_\tau(f_i) = \pi_m \circ R \circ f_i \circ \tau. \]
%By the assumptions on $R$ and $f_i$, the function $f'_{i+1}$ is a block map on $X_\tau^\leftrightarrow$. We again let $f_{i+1} = \sigma^q \circ f'_{i+1}$ for some $q$ such that $f_{i+1}(X^0) \subset X^0$.

We define the equivalence relation $\sim$ on block maps on $X_\tau^\leftrightarrow$ by $g \sim h$ if and only if $g = \sigma^n \circ h$ for some $n \in \Z$, and note that $f_{i+1} \sim C_\tau(f_i)$ holds for all $i \in \N$. Let $g$ and $h$ be two block maps on $X_\tau^\leftrightarrow$ such that $g(X^0) \subset X^0$ and $h(X^0) \subset X^0$, and suppose that $C_\tau(g) \sim C_\tau(h)$. We will show that $g \sim h$ holds, and for that, let $n \in \Z$ be such that $C_\tau(g) = \sigma^n \circ C_\tau(h)$, and let $y \in X_\tau^\leftrightarrow$ be arbitrary. Then
\begin{align*}
g(\tau(y)) &= \tau(C_\tau(g)(y)) = \tau(\sigma^n(C_\tau(h)(y))) \\
&= \sigma^{m n}(\tau(C_\tau(h)(y))) = \sigma^{m n}(h(\tau(y))),
\end{align*}
which implies that $g = \sigma^{m n} \circ h$, since $\tau(X_\tau^\leftrightarrow) = X^0$ and $X_\tau^\leftrightarrow = \bigcup_{i=p}^{m-1} \sigma^p(X^0)$. Thus, the function $C_\tau$ is injective up to $\sim$-equivalence.

Suppose that $f_i$ has radius $r \in \N$, and let $y \in X_\tau^\leftrightarrow$ be arbitrary. Denote $z = \tau(C_\tau(f_i)(y)) = f_i(\tau(y))$. Since $r$ is a radius for $f_i$, the word $z_{[0,m-1]}$ is determined by $\tau(y)_{[-r, m+r-1]}$, which in turn is determined by $y_{[-k, k]}$, where $k = \lceil \frac{r}{m} \rceil + 1$. But since $\tau$ is injective on letters, $z_{[0,m-1]}$ determines $C_\tau(f_i)(y)_0$, and thus $C_\tau(f_i)$ has radius at most $k$. This implies that $f_{i+1}$ has radius at most $k+m$, and Lemma~\ref{lem:SublinearContribution} then shows that there exists some number $N \in \N$, independent of $f$, such that for all large enough $i \in \N$, the block map $f_i$ has radius at most $N$. Let $P$ be the set of such block maps.

Since $P$ is a finite set, we have $f_{t + j} = f_t$ for some $t \in \N$ and $j > 0$. By the injectivity of $C_\tau$ up to $\sim$-equivalence, we then have $f_{i + j} \sim f_i$ for all $i \in \N$, implying $f \sim f_0 \sim f_{n j}$ for all $n \in \N$. But $f_{n j} \in P$ when $n$ is large enough, finishing the proof. \qed
\end{proof}

We show that this result is optimal in the sense that one can construct arbitrarily many nonequivalent block maps with arbitrarily large radii on the subshift of a primitive uniform substitution. Contrast this with the result of \cite{Co71}, stating that the endomorphisms of subshifts of binary uniform substitutions are symbol maps.

\begin{example}
\label{ex:ManyUniform}
Let $m \in \N$ and $n \geq 4$, and consider the substitution $\tau$ on the alphabet $A = \{a_i, b_i \;|\; 0 \leq i < m\}$ defined by $\tau(a_i) = b_{i+1} a_i^{n-1}$ and $\tau(b_i) = b_i a_i^{n-1}$, where the indices are taken modulo $m$. This is a uniform primitive substitution whose subshift has the symbol map $f(a_i) = a_{i+1}$, $f(b_i) = b_{i+1}$ and its powers as endomorphisms. We perform a state splitting on the letter $a_0$, obtaining the substitution $\rho$ on the alphabet $\hat A = A \cup \{c\}$ defined as
\[ \begin{array}{rlrl}
  \rho(a_i) &= \tau(a_i) \mathrm{~for~} 0 \leq i < m, & \rho(c) &= \tau(a_0), \\
  \rho(b_i) &= \tau(b_i) \mathrm{~for~} 0 < i < m, & \rho(b_0) &= b_0 c^{n-1}.
\end{array} \]
The letters $c$ and $a_0$ of $\rho$ together represent the letter $a_0$ of $\tau$, with the extra information of whether its preimage is $a_0$ or $b_0$. The subshift $X_\tau^\leftrightarrow$ is isomorphic to the two-sided subshift $X_\rho^\leftrightarrow \subset \hat A^\Z$ of $rho$ via an obvious isomorphism $\phi : X_\tau^\leftrightarrow \to X_\rho^\leftrightarrow$ (induced by the state-splitting) with neighborhood $[-n+1,0]$, such that $\phi^{-1}$ has neighborhood $\{0\}$. In particular, $f$ induces an endomorphism $\hat f = \phi \circ f \circ \phi^{-1}$ on the subshift $X_\rho^\leftrightarrow$ of $\rho$.

We claim that the endomorphism $\hat f$ cannot be defined by the contiguous neighborhood $[d,d+n-2] \subset \Z$ for any $d \in \Z$. Suppose for contradiction that this is the case. First, if $d \geq n$ or $d \leq -n+1$, then $f = \phi^{-1} \circ \hat f \circ \phi$ (and thus the identity morphism) can be defined by a contiguous neighborhood not containing $0$. Since $X_\tau^\leftrightarrow$ is not a periodic subshift, this is a contradiction. Thus we may assume $-n+1 < d < n$.

Let then $x \in X_\rho^\leftrightarrow$ be such that $x_{[0,3]} = b_{m-1} a_{m-1} a_{m-1} a_{m-1}$, and denote $y = \sigma(x)$. Then
\begin{alignat*}{6}
\rho(x)_{[0,3n-1]} &= b_{m-1} & a_{m-1}^{n-1} & b_0 & a_{m-1}^{n-1} & b_0 & a_{m-1}^{n-1}, \\
\rho(y)_{[0,3n-1]} &= b_0 & a_{m-1}^{n-1} & b_0 & a_{m-1}^{n-1} & b_0 & a_{m-1}^{n-1}.
\end{alignat*}
Due to $-n+1 < d < n$, the local rule of $\hat f$, applied at coordinate $n-1$, cannot see the coordinates $0$ or $3n$. Thus, $\hat f(\rho(x))_{n-1} = \hat f(\rho(y))_{n-1}$. However, we should have $\hat f(\rho(x))_{n-1} = a_0$ and $\hat f(\rho(y))_{n-1} = c$, a contradiction.

An analogous argument can be applied to all powers of $\hat f$, except the identity map. All in all, the subshift $X_\rho^\leftrightarrow$ has at least $m-1$ endomorphisms that are pairwise distinct even modulo powers of the shift, and cannot be defined by contiguous neighborhoods of size less than $n$. \qed
\end{example}

\section{Orbit-preserving Maps and \Dill{} Maps}
\label{sec:OrbitPreservingMaps}

We now begin the preparations for the proof of Theorem~\ref{thm:IntroTheorem} (and related results). Our proof is based on orbit-preserving maps, or maps that send orbits to orbits. There is extensive literature on orbit-equivalence of Cantor dynamical systems, that is, the existence of orbit-preserving homeomorphisms; see for example \cite{GiPuSk95,BoTo98,GiPuSk09}. Our motivation is a bit different: most of the aforementioned papers seek to characterize different kinds of orbit equivalence of Cantor dynamical systems and study their general consequences, while we examine the structure of the set of morphisms between two primitive substitutive subshifts, which is nonempty only if they are orbit equivalent (by the results of \cite{GiPuSk95} and \cite{Du00}).

Unlike in Section~\ref{sec:UniformCase}, it is now more convenient to consider one-way subshifts $X \subset A^\N$, as this makes indexing slightly less taxing. We note that when studying block maps from one-way subshifts to one-way subshifts, we are simultaneously studying the block maps between the corresponding two-way subshifts. Namely, if $f : X \to Y$ is a block map between two-way subshifts, then for large enough $r \in \N$, we have
\[ x_{[0, \infty)} = x'_{[0, \infty)} \implies f(x)_{[r, \infty)} = f(x')_{[r, \infty)}, \]
for all $x \in X$, so that $\sigma^r \circ f$ is well-defined between right tails of points.

\begin{definition}
\label{def:Cocycle}
Let $X, Y \subset A^\N$ be subshifts, and let $f : X \to Y$ be a function. If $s : X \to \N$ satisfies
\[ f(\sigma(x)) = \sigma^{s(x)}(f(x)) \]
for all $x \in X$, then we call $s$ a \emph{cocycle} for $f$.
\end{definition}

\begin{definition}
Let $X, Y \subset A^\N$ be subshifts. A continuous function $f : X \to Y$ is said to be \emph{orbit-preserving} if $f(\mathcal{O}(x)) \subset \mathcal{O}(f(x))$ holds for all $x \in X$. If $f$ is continuous and has a continuous cocycle $s : X \to \N$ with the property
\begin{equation}
\label{eq:Nontrivial}
\forall x \in X: \exists n \in \N: s(\sigma^n(x)) > 0,
\end{equation}
then we say $f$ is a \emph{\dill{} map}\footnotemark[1], and $s$ is a \emph{nice cocycle} for it.
\end{definition}

\footnotetext[1]{The word `\dill{}' comes from the theory of L systems: a \dill{} map corresponds to a DIL system, that is, a Deterministic Lindenmayer system with Interactions. We add an extra `l' since we are interested in the action of these maps on Long (infinite) words.}

Clearly, a continuous map is orbit-preserving if and only if it has a cocycle, so in particular a \dill{} map is orbit-preserving. Note that a continuous cocycle has a finite image: since $X$ is compact, its image in a continuous map is compact as well.

It follows immediately from the definitions that every block map is a \dill{} map, with the cocycle that sends every configuration to $1$. Also, if $\tau : A \to A^*$ is a substitution, and $X \subset A^\N$ is a subshift with the property that every long enough word in $\B(X)$ contains a letter $a \in A$ with $|\tau(a)| > 0$, then the extension $\tau : X \to A^\N$ is also a \dill{} map, with the natural cocycle $s(x) = |\tau(x_0)|$.

\begin{definition}
Let $X, Y \subset A^\N$ be subshifts, and let $\Phi : X \to Y$ be a \dill{} map. A continuous function $\phi : X \to A^*$ satisfying
\begin{equation}
\label{eq:Implementation}
\Phi(x) = \phi(x) \phi(\sigma(x)) \phi(\sigma^2(x)) \cdots
\end{equation}
for all $x \in X$ is called an \emph{implementation of $\Phi$}.
\end{definition}

\begin{lemma}
\label{lem:LocalRule}
If $\Phi : X \to Y$ is a \dill{} map between subshifts $X, Y \subset A^\N$, then it has an implementation.
\end{lemma}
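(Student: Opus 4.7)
The plan is to define the implementation pointwise via the cocycle: set $\phi(x) = \Phi(x)_{[0, s(x) - 1]}$, interpreting this as the empty word $\epsilon$ when $s(x) = 0$. The three things to verify are continuity of $\phi$, the telescoping identity expressing $\Phi(x)$ as a product of translated copies of $\phi$, and the fact that these copies actually ``fill up'' $\Phi(x)$.

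For continuity, observe that $s : X \to \N$ takes values in the discrete space $\N$, so by continuity it is locally constant on $X$. Since $\Phi$ is also continuous, for any $x$ and any $x'$ close enough to $x$ we have both $s(x') = s(x)$ and $\Phi(x')_{[0, s(x) - 1]} = \Phi(x)_{[0, s(x) - 1]}$, giving $\phi(x') = \phi(x)$. For the telescoping identity, the cocycle equation $\Phi(\sigma(x)) = \sigma^{s(x)}(\Phi(x))$ directly yields $\Phi(x) = \Phi(x)_{[0, s(x) - 1]} \cdot \Phi(\sigma(x)) = \phi(x) \cdot \Phi(\sigma(x))$. Iterating $n$ times, I get
\[
\Phi(x) = \phi(x) \phi(\sigma(x)) \cdots \phi(\sigma^{n-1}(x)) \cdot \Phi(\sigma^n(x)),
\]
so the concatenation $\phi(x)\phi(\sigma(x))\phi(\sigma^2(x))\cdots$ agrees with $\Phi(x)$ on the first $\sum_{i=0}^{n-1} s(\sigma^i(x))$ coordinates for every $n$.

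The main obstacle, and the only place where the nontriviality condition~(\ref{eq:Nontrivial}) on the cocycle is used, is showing that these partial sums grow unboundedly, so that the infinite concatenation is indeed an element of $A^\N$ equal to $\Phi(x)$. Since $s$ is continuous and $\N$-valued, for each $x \in X$ there is a clopen neighborhood $U_x$ on which $s$ is constant, and by~(\ref{eq:Nontrivial}) and continuity of $\sigma$, we can further shrink $U_x$ so that some fixed $n_x \in \N$ satisfies $s(\sigma^{n_x}(y)) > 0$ for all $y \in U_x$. By compactness of $X$, finitely many such neighborhoods cover $X$, giving a uniform constant $N \in \N$ such that for every $y \in X$, there exists $n \le N$ with $s(\sigma^n(y)) > 0$; that is, $\sum_{i=0}^{N-1} s(\sigma^i(y)) \ge 1$ for all $y$. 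Applying this repeatedly to $\sigma^{kN}(x)$ gives $\sum_{i=0}^{kN - 1} s(\sigma^i(x)) \ge k$, so the partial sums tend to infinity and the infinite concatenation $\phi(x)\phi(\sigma(x))\cdots$ is equal to $\Phi(x)$, completing the proof.
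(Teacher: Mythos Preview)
Your proof is correct and follows exactly the same approach as the paper: define $\phi(x) = \Phi(x)_{[0,s(x)-1]}$ from a nice cocycle $s$. The paper's proof consists of that single formula with no further justification, whereas you have carefully filled in the routine verifications (continuity, the telescoping identity, and the use of compactness together with~\eqref{eq:Nontrivial} to ensure the infinite concatenation exhausts $\Phi(x)$); aside from a harmless off-by-one in the index range of the displayed sum, everything is in order.
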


\begin{proof}
Let $s : X \to \N$ be a nice cocycle for $\Phi$. The implementation can be defined by $\phi(x) = \Phi(x)_{[0,s(x)-1]}$ for all $x \in X$.
\qed
\end{proof}

We usually write ${^n}\phi(x) = \phi(x) \phi(\sigma(x)) \cdots \phi(\sigma^{n-1}(x))$, and (abusing notation), we then have
\[ \Phi(x) = \lim_{n \rightarrow \infty} {^n}\phi(x) = {^\infty} \phi(x). \]

\begin{remark}
\label{rem:NoPeriodicPoints}
If $\Phi : X \to Y$ is a \dill{} map, and $Y$ contains no $\sigma$-periodic points, then the cocycle -- and thus also the implementation -- of $\Phi$ is \emph{unique}.
\end{remark}

In Section~\ref{sec:UniformCase}, we showed that the radius of the local rule of a block map becomes small as it is conjugated by a uniform primitive substitution. Similarly, to prove the main theorem, we will show that when a \dill{} map is conjugated by a substitution, the implementation given by Lemma~\ref{lem:LocalRule} again becomes `small'. To formalize this, we define an analogue of radius for \dill{} maps: the radius pair.

\begin{definition}
Let $\Phi : X \to Y$ be a \dill{} map, and let $r, m \in \N$. If $\Phi$ has an implementation $\phi$ such that
\[ r = \min \{ R \in \N \;|\; x_{[0, R]} = y_{[0, R]} \implies \phi(x) = \phi(y) \} \]
and $m = \max \{ |\phi(x)| \;|\; x \in X \}$,
then $(r,m)$ is a \emph{radius pair of $\Phi$}.
\end{definition}

In the above situation, we write $I(\phi) = r$ and $O(\phi) = m$, and say $r$ and $m$ are the \emph{in-radius} and \emph{out-radius} of $\phi$, respectively. We also note that a \dill{} map can have multiple radius pairs, while Lemma~\ref{lem:LocalRule} implies that it always has at least one: since an implementation is a continuous function from a subshift to a finite set, it must have a finite radius and a finite image.

Finally, we can also compose two \dill{} maps by composing their implementations in the obvious way.

\begin{lemma}
\label{lem:LocalComposition}
Suppose $\Phi_1 : X \to Y$ and $\Phi_2 : Y \to Z$ are \dill{} maps with implementations $\phi_1$ and $\phi_2$, respectively. Then $\Phi_2 \circ \Phi_1 : X \to Z$ is also a \dill{} map, and
\begin{equation*}
\phi(x) = {^{|\phi_1(x)|} \phi_2(^{\infty}\phi_1(x))}
\end{equation*}
is an implementation for it.
\end{lemma}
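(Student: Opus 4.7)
The plan is to take the proposed formula for $\phi$ and verify directly that it satisfies (\ref{eq:Implementation}) with respect to $\Phi_2 \circ \Phi_1$; the continuity and nice-cocycle conditions will then fall out. Composition of continuous maps is continuous, so the only nontrivial part of showing that $\Phi_2 \circ \Phi_1$ is a \dill{} map is producing a continuous nice cocycle, and this will be obtained as $s(x) = |\phi(x)|$ once $\phi$ has been shown to be a valid implementation.

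First, I would check continuity of $\phi$. The cocycle $s_1(x) = |\phi_1(x)|$ associated with $\phi_1$ is continuous and $X$ is compact, so $s_1$ has finite image. On each clopen level set $\{x : s_1(x) = k\}$, the map $\phi$ equals the finite concatenation $\phi_2(\Phi_1(x)) \phi_2(\sigma\Phi_1(x)) \cdots \phi_2(\sigma^{k-1}\Phi_1(x))$, which is continuous as a composition of continuous maps.

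Next comes the key step, verifying the concatenation identity $\Phi_2(\Phi_1(x)) = \phi(x) \phi(\sigma x) \phi(\sigma^2 x) \cdots$. Applying the cocycle property of $\Phi_1$ inductively gives $\Phi_1(\sigma^n x) = \sigma^{k_n}(\Phi_1(x))$, where $k_n = |\phi_1(x)| + |\phi_1(\sigma x)| + \cdots + |\phi_1(\sigma^{n-1} x)|$. Consequently, for every $n$,
\[ \phi(\sigma^n x) = \phi_2(\sigma^{k_n} \Phi_1(x)) \, \phi_2(\sigma^{k_n+1} \Phi_1(x)) \cdots \phi_2(\sigma^{k_{n+1}-1} \Phi_1(x)). \]
Concatenating $\phi(\sigma^n x)$ over $n = 0, 1, 2, \ldots$ therefore reassembles into $\phi_2(\Phi_1(x)) \phi_2(\sigma\Phi_1(x)) \phi_2(\sigma^2\Phi_1(x)) \cdots$, which is $\Phi_2(\Phi_1(x))$ by the implementation property of $\phi_2$.

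Finally, the induced cocycle $s(x) = |\phi(x)|$ is continuous (it has finite image, and its level sets are preimages of continuous data under $\phi$), and it satisfies the nontriviality condition (\ref{eq:Nontrivial}): if $s(\sigma^n x) = 0$ held for all $n$, the concatenation above would produce the empty word, contradicting $\Phi_2(\Phi_1(x)) \in A^\N$. The only real bookkeeping is the simultaneous handling of the two cocycles in the rebracketing step; beyond that, the proof is a direct unwinding of the definitions.
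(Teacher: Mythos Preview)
The paper states this lemma without proof (calling it the ``obvious way'' to compose implementations), and your direct verification is correct. One small point worth making explicit: the telescoping indices $k_n$ must tend to infinity for the rebracketing to actually exhaust $\Phi_2(\Phi_1(x))$, and this follows at once from the nice-cocycle condition \eqref{eq:Nontrivial} applied to $\phi_1$.
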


We also remark that in the above situation,
\begin{equation}
\label{eq:CompoManyFormula}
(^n \phi)(x) = {^{|(^n \phi_1)(x)|} \phi_2(^{\infty}\phi_1(x))}
\end{equation}
holds for all $x \in X$ and $n \in \N$.

\section{Invariants of \Dill{} Maps and their Behavior in Composition}
\label{sec:Invariants}

In this section, we define two invariants, called $Z$ and $D$, that capture how much a word grows when a dill map is applied to it. For an implementation $\phi : X \to A^*$, the invariant $Z(\phi)$ is the limit of $\frac{|(^n\phi)(x)|}{n}$ when this exists and is independent of $x \in X$, and thus measures the size of images of $^n \phi$ in the limit. The invariant $D(\phi)$ measures the maximum of the absolute differences between $|(^n\phi)(x)|$ and $n \cdot Z(\phi)$ for $x \in X$, so that $Z$ and $D$ together give strong analytical information about the \dill{} map. They also give an upper bound for the out-radius $O(\phi)$ when we take $n = 1$. We also discuss the in-radius $I(\phi)$. The inequalities we obtain for the invariants in this section -- in particular Lemma~\ref{lem:Threes} -- are the main technical tool of the proof of the main theorem. While even the existence of $Z$ (and especially $D$) is quite a strong assumption, they exist for both block maps and primitive Pisot substitutions. The claim is trivial for block maps, and it is proved for substitutions in Section~\ref{sec:BalanceAndInvertibility}.

For the rest of this section, we only consider \dill{} maps to which Remark~\ref{rem:NoPeriodicPoints} applies, and for each invariant $H$ we write $H(\Phi)$ for $H(\phi)$, where $\phi$ is the unique implementation of a \dill{} map $\Phi$.

\begin{definition}
Let $\phi : X \to A^*$ be the implementation of a \dill{} map $\Phi : X \to Y$. We define $Z(\phi) = \lambda \in \R$, if $\lim_n \frac{|(^n\phi)(x)|}{n} = \lambda$ for all $x \in X$.
\end{definition}

The invariant $D(\phi)$ is defined in terms of $Z(\phi)$ as follows:

\begin{definition}
Let $\phi : X \to A^*$ be the implementation of a \dill{} map $\Phi : X \to Y$ such that $Z(\phi) = \lambda$. We define $D(\phi)$ as the smallest $D \in \N$ such that for all $x \in X$ and for all $n \in \N$, we have $|(^n \phi)(x)| \in [\lambda n - D, \lambda n + D]$ (when such a $D$ exists).
\end{definition}

In what follows, the notations $Z(\Phi)$ and $D(\Phi)$ imply that the respective invariants are well-defined, unless otherwise noted.

\begin{lemma}
\label{lem:DetermineTheFirst}
Let $\phi : X \to A^*$ be the implementation of a \dill{} map $\Phi : X \to Y$. Then for all $x \in X$ and $m \in \N$, the first $Z(\Phi)^{-1}(m + D(\Phi)) + I(\Phi)$ coordinates of $x$ determine the first $m$ coordinates of $\Phi(x)$.
\end{lemma}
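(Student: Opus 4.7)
The plan is simply to unfold the definitions. Write $\lambda = Z(\Phi)$, $D = D(\Phi)$ and $r = I(\Phi)$. Since by definition $\Phi(x) = {^\infty}\phi(x)$, determining the prefix $\Phi(x)_{[0,m-1]}$ reduces to determining a finite initial concatenation ${^n\phi}(x) = \phi(x)\phi(\sigma(x))\cdots\phi(\sigma^{n-1}(x))$ for some $n$ large enough that $|{^n\phi}(x)| \geq m$.

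First I would choose $n \in \N$ to be the smallest natural number with $\lambda n - D \geq m$. By the definition of $D(\Phi)$, this choice guarantees
\[ |{^n\phi}(x)| \geq \lambda n - D \geq m, \]
so the required prefix $\Phi(x)_{[0,m-1]}$ is already contained inside ${^n\phi}(x)$. The minimality of $n$ gives $n \leq \lambda^{-1}(m+D)$ under the usual ceiling convention by which "the first $t$ coordinates" is read as "the first $\lceil t \rceil$ coordinates" when $t \notin \N$.

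Next I would invoke the definition of the in-radius: each factor $\phi(\sigma^i(x))$ is determined by $(\sigma^i(x))_{[0,r]} = x_{[i,i+r]}$. Stringing these together, ${^n\phi}(x)$ — and hence $\Phi(x)_{[0,m-1]}$ — is determined by the coordinates $x_{[0,n+r-1]}$, that is, by the first $n+r$ coordinates of $x$. Combining with the bound on $n$ produces the stated estimate $Z(\Phi)^{-1}(m+D(\Phi)) + I(\Phi)$.

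The whole argument is essentially mechanical once one lines up the three invariants with the three quantities appearing in the bound: $Z(\Phi)^{-1}$ converts a target output length into a required number $n$ of iterated applications of $\phi$, $D(\Phi)$ absorbs the deviation from the linear estimate $|{^n\phi}(x)| \approx \lambda n$, and $I(\Phi)$ accounts for the extra input letters needed to evaluate the last factor $\phi(\sigma^{n-1}(x))$. The only mild subtlety is the integer rounding implicit in choosing $n$, and I do not anticipate any real obstacle beyond making that convention explicit.
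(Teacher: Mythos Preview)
Your proposal is correct and follows essentially the same route as the paper: use the in-radius to see that the first $n+I(\Phi)$ coordinates of $x$ determine $^n\phi(x)$, use the definition of $D(\Phi)$ to see that $|{^n\phi}(x)| \geq Z(\Phi)n - D(\Phi)$, and solve for $n$. The paper phrases it in the forward direction (start with $n$ input coordinates, deduce how many output coordinates are determined, then invert), whereas you start from the desired output length and work back, but the content is identical; if anything, you handle the integer-rounding issue more explicitly than the paper does.
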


\begin{proof}
We prove that the first $n$ coordinates of $x$ determine (at least) the first $Z(\Phi) (n - I(\Phi)) - D(\Phi)$ coordinates of $\Phi(x)$, for all $n \geq I(\Phi) + 1$: The in-radius of $\phi$ is $I(\Phi)$, so the first $n$ coordinates of $x$ determine that $\Phi(x)$ begins with ${^{n-I(\Phi)}}\phi(x)$. By the definition of $Z(\Phi)$ and $D(\Phi)$, the length of this word is at least $Z(\Phi) (n-I(\Phi)) - D(\Phi)$. The original claim now follows by setting $n = Z(\Phi)^{-1}(m + D(\Phi)) + I(\Phi)$. \qed
\end{proof}

The following lemma quantifies the behavior of the invariants under composition. The invariant $Z$ is quite well-behaved, but the formulas for $D$ and $I$ are less elegant.

\begin{lemma}
\label{lem:InvariantsInComposition}
Let $\Phi_1 : X \to Y$ and $\Phi_2 : Y \to Z$ be \dill{} maps for which the invariants $Z$ and $D$ are well-defined. Then we have
\begin{equation}
\label{eq:CompZ}
Z(\Phi_2 \circ \Phi_1) = Z(\Phi_1) Z(\Phi_2)
\end{equation}
for the invariant $Z$,
\begin{equation}
\label{eq:CompD}
D(\Phi_2 \circ \Phi_1) \leq Z(\Phi_2) D(\Phi_1) + D(\Phi_2)
\end{equation}
for $D$, and
\begin{equation}
\label{eq:CompI}
I(\Phi_2 \circ \Phi_1) \leq Z(\Phi_1)^{-1}(2D(\Phi_1) + I(\Phi_2)) + I(\Phi_1) + 1
\end{equation}
for $I$.
\end{lemma}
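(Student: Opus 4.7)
The plan is to work directly with the implementation $\phi$ of $\Phi_2 \circ \Phi_1$ provided by Lemma~\ref{lem:LocalComposition}, and to use the ``iterated'' version \eqref{eq:CompoManyFormula}, namely
\[
({}^{n}\phi)(x) \;=\; {}^{|({}^n\phi_1)(x)|}\phi_2\bigl({}^\infty\phi_1(x)\bigr),
\]
to translate each invariant inequality into an arithmetic statement about lengths.

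For \eqref{eq:CompZ}, I would set $k_n = |({}^n\phi_1)(x)|$, so that $|({}^n\phi)(x)| = |({}^{k_n}\phi_2)({}^\infty\phi_1(x))|$, and then write
\[
\frac{|({}^n\phi)(x)|}{n} \;=\; \frac{k_n}{n} \cdot \frac{|({}^{k_n}\phi_2)({}^\infty\phi_1(x))|}{k_n}.
\]
Because $k_n/n \to Z(\Phi_1)$ uniformly in $x$ (with bounded error $D(\Phi_1)/n$), and because $k_n \to \infty$ so the second factor tends to $Z(\Phi_2)$, the product converges to $Z(\Phi_1) Z(\Phi_2)$, independent of $x$.

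For \eqref{eq:CompD}, the same identity gives a quantitative version: from $|k_n - nZ(\Phi_1)| \le D(\Phi_1)$ and $\bigl||({}^{k_n}\phi_2)({}^\infty\phi_1(x))| - k_n Z(\Phi_2)\bigr| \le D(\Phi_2)$, the triangle inequality yields
\[
\bigl||({}^n\phi)(x)| - n Z(\Phi_1)Z(\Phi_2)\bigr| \;\le\; D(\Phi_2) + Z(\Phi_2) D(\Phi_1),
\]
which is exactly the claim.

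For \eqref{eq:CompI}, which I expect to be the fiddly step, the point is to count how much of $x$ is needed to determine $\phi(x) = {}^{|\phi_1(x)|}\phi_2({}^\infty\phi_1(x))$. First, $|\phi_1(x)| \le Z(\Phi_1) + D(\Phi_1)$, so I need the first $|\phi_1(x)| + I(\Phi_2) \le Z(\Phi_1) + D(\Phi_1) + I(\Phi_2)$ coordinates of $\Phi_1(x) = {}^\infty\phi_1(x)$ in order to read off every relevant application of $\phi_2$ (the prefix of $\Phi_1(x)$ of this length also determines $|\phi_1(x)|$ itself, since it includes $\phi_1(x)$). Now I apply Lemma~\ref{lem:DetermineTheFirst} with $m = Z(\Phi_1) + D(\Phi_1) + I(\Phi_2)$: this many coordinates of $\Phi_1(x)$ are determined by the first
\[
Z(\Phi_1)^{-1}\bigl(Z(\Phi_1) + D(\Phi_1) + I(\Phi_2) + D(\Phi_1)\bigr) + I(\Phi_1) \;=\; Z(\Phi_1)^{-1}\bigl(2D(\Phi_1) + I(\Phi_2)\bigr) + I(\Phi_1) + 1
\]
coordinates of $x$, giving the stated bound on $I(\Phi_2 \circ \Phi_1)$. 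The main obstacle is just keeping straight the off-by-one bookkeeping between ``first $n$ coordinates'', the index range $[0,R]$ appearing in the definition of in-radius, and the sublinear slack $D(\Phi_1)$ in converting between input and output positions of $\Phi_1$; the arithmetic is routine once one fixes the convention.
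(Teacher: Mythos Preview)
Your approach is essentially the same as the paper's: you use \eqref{eq:CompoManyFormula} to reduce \eqref{eq:CompZ} and \eqref{eq:CompD} to arithmetic with $k_n = |({}^n\phi_1)(x)|$, and for \eqref{eq:CompI} you bound $O(\phi_1)$ by $Z(\Phi_1)+D(\Phi_1)$ and invoke Lemma~\ref{lem:DetermineTheFirst} with $m = O(\phi_1)+I(\Phi_2)$, exactly as the paper does.

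One small correction in your handling of \eqref{eq:CompI}: the parenthetical ``the prefix of $\Phi_1(x)$ of this length also determines $|\phi_1(x)|$ itself, since it includes $\phi_1(x)$'' is not a valid justification. Knowing a prefix of $\Phi_1(x)=\phi_1(x)\phi_1(\sigma(x))\cdots$ does \emph{not} tell you where $\phi_1(x)$ ends; that boundary depends on $x$, not on $\Phi_1(x)$. The correct reason (which the paper states) is that $|\phi_1(x)|$ is determined by the first $I(\Phi_1)+1$ coordinates of $x$, and the number $Z(\Phi_1)^{-1}(2D(\Phi_1)+I(\Phi_2))+I(\Phi_1)+1$ of coordinates of $x$ you are already reading is at least $I(\Phi_1)+1$. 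So your conclusion stands, only the stated reason needs fixing.
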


\begin{proof}
For clarity, we write $H_i = H(\Phi_i)$ for each invariant $H$ and $i \in \{1, 2\}$. Let also $\phi_i$ be the implementation of $\Phi_i$. Let $x \in X$ be arbitrary. Using \eqref{eq:CompoManyFormula}, we have
\begin{align*}
|(^n \phi)(x)| &= |(^{|(^n \phi_1)(x)|}\phi_2)((^\infty\phi_1)(x))| \\
&= |(^{(1 + \epsilon_1(n, x)) Z_1 n}\phi_2)((^\infty\phi_1)(x))| \\
&= (1 + \epsilon_1(n, x))(1 + \epsilon_2(n, x)) Z_1 Z_2 n,
\end{align*}
where $\epsilon_1(n, x), \epsilon_2(n, x) \rightarrow 0$ as $n \rightarrow \infty$. This proves \eqref{eq:CompZ}. We also have
\begin{align*}
|(^n \phi)(x)| &= |(^{|(^n \phi_1)(x)|}\phi_2)((^\infty\phi_1)(x))| \\
&= |(^{Z_1 n + k_1}\phi_2)((^\infty\phi_1)(x))| & k_1 \in [-D_1, D_1] \\
&= Z_2 Z_1 n + Z_2 k_1 + k_2 & k_2 \in [-D_2, D_2]
\end{align*}
where the $k_i$ are taken among reals, which proves \eqref{eq:CompD}.

It remains to show \eqref{eq:CompI}. For this, we need to bound the in-radius of the function
\[ \phi(x) = {^{|\phi_1(x)|} \phi_2(^{\infty}\phi_1(x))}, \]
that is, we need to bound the number of coordinates of $x \in X$ we need to know in order to determine this word. To compute $\phi(x)$, it is enough to know the value of $|\phi_1(x)|$ and the first $|\phi_1(x)| + I_2$ coordinates of $^{\infty}\phi_1(x)$. The value $|\phi_1(x)|$ is determined by the first $I_1 + 1$ coordinates of $x$. By Lemma~\ref{lem:DetermineTheFirst}, the first $Z_1^{-1}(m + D_1) + I_1$ coordinates of $x$ uniquely determine the first $m$ coordinates of $^{\infty}\phi_1(x)$. Setting $m = O_1 + I_2$, we thus have that the first
\[ k = Z_1^{-1}(O_1 + I_2 + D_1) + I_1 \]
coordinates determine at least the first $O_1 + I_2 \geq |\phi_1(x)| + I_2$ coordinates of $^{\infty}\phi_1(x)$. Since $k > I_1$, the length $|\phi_1(x)|$ is also determined. Finally, $O_1 \leq Z_1 + D_1$ by taking $n = 1$ in the definition of $D_1$. Substituting this into the expression for $k$ gives the formula \eqref{eq:CompI}.
\qed
\end{proof}

In fact, we need such formulas for compositions of three \dill{} maps, the second of which has a $Z$-value of exactly $1$. Proving these inequalities is of course a matter of applying Lemma~\ref{lem:InvariantsInComposition} twice.

\begin{lemma}
\label{lem:Threes}
For some function $C$ from pairs of dill maps to real numbers, assuming $Z_2 = 1$, we have
\begin{align*}
D(\Phi_3 \circ \Phi_2 \circ \Phi_1) &\leq Z_3 (D_1 + D_2) + D_3 \\
&\leq Z_3 D_2 + C(\Phi_1, \Phi_3) \\
I(\Phi_3 \circ \Phi_2 \circ \Phi_1) &\leq Z_1^{-1}(2D_1 + 2D_2 + I_3 + I_2 + 1) + I_1 + 1 \\
&\leq Z_1^{-1} (I_2 + 2 D_2) + C(\Phi_1, \Phi_3)
\end{align*}
\end{lemma}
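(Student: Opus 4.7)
The plan is to obtain both inequalities by applying Lemma~\ref{lem:InvariantsInComposition} twice, regarding the triple composition as $(\Phi_3 \circ \Phi_2) \circ \Phi_1$. The hypothesis $Z_2 = 1$ will be used to simplify the intermediate expressions so that contributions from $\Phi_2$ separate cleanly from those of $\Phi_1$ and $\Phi_3$.

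First I would handle $D$. Applying \eqref{eq:CompD} to the composition $\Phi_3 \circ \Phi_2$ and using \eqref{eq:CompZ} gives $Z(\Phi_3 \circ \Phi_2) = Z_3 Z_2 = Z_3$ and $D(\Phi_3 \circ \Phi_2) \leq Z_3 D_2 + D_3$. A second application of \eqref{eq:CompD}, now to $(\Phi_3 \circ \Phi_2) \circ \Phi_1$, yields
\[ D(\Phi_3 \circ \Phi_2 \circ \Phi_1) \leq Z_3 D_1 + (Z_3 D_2 + D_3) = Z_3(D_1 + D_2) + D_3, \]
which is the first bound. Isolating the $Z_3 D_2$ term and collecting everything that depends only on $\Phi_1$ and $\Phi_3$ into a quantity $C_D(\Phi_1, \Phi_3) := Z_3 D_1 + D_3$ produces the second bound $Z_3 D_2 + C_D(\Phi_1, \Phi_3)$.

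Next I would handle $I$ in the same way. Applying \eqref{eq:CompI} to $\Phi_3 \circ \Phi_2$ with $Z_2 = 1$ gives
\[ I(\Phi_3 \circ \Phi_2) \leq Z_2^{-1}(2D_2 + I_3) + I_2 + 1 = 2D_2 + I_3 + I_2 + 1. \]
Applying \eqref{eq:CompI} once more to $(\Phi_3 \circ \Phi_2) \circ \Phi_1$ and substituting this estimate gives
\[ I(\Phi_3 \circ \Phi_2 \circ \Phi_1) \leq Z_1^{-1}\bigl(2D_1 + 2D_2 + I_3 + I_2 + 1\bigr) + I_1 + 1, \]
which is the first of the two $I$-bounds. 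Splitting the right-hand side as $Z_1^{-1}(I_2 + 2D_2) + \bigl[Z_1^{-1}(2D_1 + I_3 + 1) + I_1 + 1\bigr]$ and defining $C_I(\Phi_1, \Phi_3)$ to be the bracketed quantity yields the second bound.

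Finally, to produce a single function $C$ as demanded by the statement, I would set $C(\Phi_1, \Phi_3) = \max\{C_D(\Phi_1, \Phi_3), C_I(\Phi_1, \Phi_3)\}$; both of the second-form inequalities then hold simultaneously. No step presents a genuine obstacle: the argument is pure bookkeeping on top of Lemma~\ref{lem:InvariantsInComposition}, and the only point to watch is that the hypothesis $Z_2 = 1$ is what allows the $\Phi_2$-dependent quantities $D_2$ and $I_2$ to end up outside the factor $Z_1^{-1}$ in the $I$-bound and multiplied only by $Z_3$ (rather than by $Z_2 Z_3$, which would also be fine but less readable) in the $D$-bound.
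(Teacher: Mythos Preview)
Your proposal is correct and follows exactly the approach the paper indicates: the paper's entire proof is the sentence ``Proving these inequalities is of course a matter of applying Lemma~\ref{lem:InvariantsInComposition} twice,'' and your grouping $(\Phi_3 \circ \Phi_2) \circ \Phi_1$ together with the two applications of \eqref{eq:CompD} and \eqref{eq:CompI} recovers precisely the stated bounds. The only minor slip is in your closing remark, where you say $Z_2 = 1$ lets $D_2$ and $I_2$ end up \emph{outside} the factor $Z_1^{-1}$; in fact they remain inside it, and the role of $Z_2 = 1$ is simply to remove the extra $Z_2^{-1}$ that would otherwise multiply $2D_2 + I_3$.
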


\begin{comment}
\begin{lemma}
\label{lem:Threes}
For some function $C$ from pairs of dill maps to real numbers, we have
\begin{align*}
D(\Phi_3 \circ \Phi_2 \circ \Phi_1) &\leq Z_3 Z_2 D_1 + Z_3 D_2 + D_3 \\
&\leq Z_3 D_2 + C(\Phi_1, \Phi_3) \\
I(\Phi_3 \circ \Phi_2 \circ \Phi_1) &\leq Z_1^{-1}(2D_1 + Z_2^{-1}(2D_2 + I_3) + I_2 + 2) + I_1 + 2 \\
&\leq Z_1^{-1} I_2 + 2 Z_1^{-1} Z_2^{-1} D_2 + C(\Phi_1, \Phi_3)
\end{align*}
\end{lemma}
\end{comment}

In particular, if we also have $Z_3 < 1$ and $Z_1 > 1$ (which will be the case in our application), then the contribution of $D_2$ and $I_2$ to the corresponding invariants of $\Phi_3 \circ \Phi_2 \circ \Phi_1$ is of the type in Lemma~\ref{lem:SublinearContribution}.

\section{Balanced Growth and Almost Invertibility}
\label{sec:BalanceAndInvertibility}

In this section, we define notions of balanced growth, almost equivalence and almost invertibility for \dill{} maps. A \dill{} map has balanced growth if all words `blow up' by roughly the same amount in the application of its implementation. Almost equivalence of two dill maps $\Phi_1$ and $\Phi_2$ means that the $\Phi_1$-image and the $\Phi_2$-image of each point differ only by a shift. By almost invertibility, we mean invertibility up to this almost equivalence. For substitutions, we show that almost invertibility follows from primitivity, and balance from both uniformity and the Pisot property. In the following sections, we use this knowledge to generalize the arguments of Section~\ref{sec:UniformCase}.

\begin{definition}
\label{def:Balance}
A \dill{} map has \emph{balanced growth} if the invariants $Z$ and $D$ are well-defined for it.
\end{definition}

This is a nonstandard term. In particular, the concept of balanced growth is not equivalent to the characterization of Sturmian words as aperiodic sequences whose languages are `balanced'. %We note that $Z$ exists for all primitive substitutions.

\begin{lemma}
\label{lem:ZExists}
For every primitive substitution $\tau : X_\tau \to X_\tau$ seen as a \dill{} map, the invariant $Z(\tau)$ is well-defined and equal to the dominant eigenvalue of $\tau$.
\end{lemma}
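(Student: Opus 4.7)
The natural implementation of $\tau$ viewed as a \dill{} map is $\phi(x) = \tau(x_0)$, with cocycle $s(x) = |\tau(x_0)|$, so that ${^n\phi}(x) = \tau(x_{[0,n-1]})$. Grouping by letters,
\[
  \frac{|{^n\phi}(x)|}{n} \;=\; \frac{|\tau(x_{[0,n-1]})|}{n} \;=\; \sum_{a \in A} \frac{|x_{[0,n-1]}|_a}{n}\,|\tau(a)|.
\]
The strategy is to show that the right-hand side converges, for every $x \in X_\tau$, to the dominant eigenvalue $\lambda$ of the associated matrix $M$.

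The first step is to invoke the classical fact that the subshift of a primitive substitution is uniquely ergodic. Applied to the continuous indicator functions $\mathbf{1}_{[a]}$, this yields that the letter frequencies $\mu_a := \lim_n |x_{[0,n-1]}|_a/n$ exist (uniformly in $x$) and are independent of $x \in X_\tau$. Consequently $\lim_n |{^n\phi}(x)|/n = \mu \cdot \ell$, where $\ell \in \N^A$ is the vector with $\ell_a = |\tau(a)|$, and in particular the limit exists.

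Next I would identify $\mu$ with the normalized left Perron eigenvector of $M$. Since the Parikh vector of $\tau^n(a)$ is the $a$-th row of $M^n$, Perron-Frobenius applied to the primitive nonnegative matrix $M$ shows that these row vectors, once normalized to sum to one, converge to the left eigenvector $w$ associated with $\lambda$. Because $X_\tau$ is the orbit closure of any $\tau$-periodic point and letter frequencies are constant on $X_\tau$, this forces $\mu = w$, where $w$ is scaled so that $\sum_a w_a = 1$. Then using $\ell = M\mathbf{1}$ and $wM = \lambda w$,
\[
  \mu \cdot \ell \;=\; w \cdot M\mathbf{1} \;=\; (wM) \cdot \mathbf{1} \;=\; \lambda\,(w \cdot \mathbf{1}) \;=\; \lambda,
\]
which gives $Z(\tau) = \lambda$.

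The main nontrivial ingredient is the uniform existence of letter frequencies along every orbit of $X_\tau$; this is the single step that cannot be done by pure bookkeeping and requires appealing to the dynamical theory of primitive substitutions. Everything after that is Perron-Frobenius arithmetic combined with the bilinear formula for $|\tau(v)|$ in terms of the Parikh vector of $v$.
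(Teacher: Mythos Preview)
Your proposal is correct and follows exactly the approach the paper takes: the paper invokes unique ergodicity of $X_\tau$ (citing \cite{Qu87}) to conclude that $Z(\tau)$ is well-defined, and then declares the identification with the dominant eigenvalue to be ``standard'' and omits it. You have simply written out those standard details---the letter-frequency computation and the Perron--Frobenius identification of $\mu$ with the normalized left eigenvector---so there is nothing to correct or contrast.
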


\begin{proof}
Since $X_\tau$ is uniquely ergodic \cite{Qu87}, it is easy to see that $Z(\tau)$ is well-defined. The proof of equality of $Z(\tau)$ and the dominant eigenvalue is standard, and we omit it. \qed
\end{proof}

Note that while Lemma~\ref{lem:InvariantsInComposition} shows that $Z$ behaves nicely in the composition of \dill{} maps, we cannot conclude from it and Lemma~\ref{lem:ZExists} that the dominant eigenvalue behaves nicely in the composition of two primitive substitutions. This is because the well-definedness of $Z$ for a substitution $\tau$ depends on the subshift $X_\tau$ more than the action of $\tau$ itself, and if $\rho$ is another substitution, the map $\tau \circ \rho : X_{\tau \circ \rho} \to X_{\tau \circ \rho}$ is not the composition of $\tau : X_\tau \to X_\tau$ and $\rho : X_\rho \to X_\rho$ since the domains and ranges are not compatible. We do not iterate $\tau$ in the definition of $Z(\tau)$, but only in the definition of $X_\tau$, and this iteration is what connects $Z(\tau)$ to the dominant eigenvalue.

It is obvious that uniform substitutions have balanced growth as \dill{} maps, and we next show that this holds also for Pisot substitutions. It is well-known that no substitution is both uniform and Pisot, so that \dill{} maps with balanced growth are a common generalization of the two.

\begin{lemma}[Part of Corollary 2 in \cite{Ad04}]
\label{lem:PisotTheorem}
Let $x \in A^\N$ be a fixed point of a primitive Pisot substitution, and define $\mu(a) = \lim_N \frac{|x_{[0, N-1]}|_a}{N}$ for all letters $a \in A$. Then there exists $C > 0$ such that for all $a \in A$ and $N \in \N$, we have $\big| |x_{[0, N-1]}|_a - N\mu(a) \big| < C$.
\end{lemma}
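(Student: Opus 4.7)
My plan is to compare the prefix letter counts of $x$ to the expected counts $N\mu$ via a Dumont--Thomas-style decomposition of prefixes, using the Pisot spectral gap to control the accumulated error. Let $M$ be the incidence matrix of $\tau$ defined by $M_{ab} = |\tau(a)|_b$, and let $\ell : A^* \to \N^{|A|}$ be the abelianization (the column vector of letter counts), so that $\ell(\tau(w)) = M^T \ell(w)$. By primitivity, Perron--Frobenius provides a simple dominant eigenvalue $\lambda > 1$ of $M$ together with a strictly positive right eigenvector $\mu$ of $M^T$, which I normalize by $\mathbf{1}^T \mu = 1$. The Pisot hypothesis says that every other eigenvalue has modulus strictly less than $1$, so I may write $M^T = \lambda P + S$, where $P$ is the rank-one spectral projector onto $\R\mu$ and $\|S^k\| \leq C_1 \theta^k$ for some $\theta < 1$ and some constant $C_1$.

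Next, I will set up the Dumont--Thomas decomposition of prefixes of $x$. Since $x = \tau(x)$, any prefix $x_{[0, N-1]}$ admits a unique splitting $x_{[0, N-1]} = \tau(x_{[0, m-1]}) \cdot v$, where $v$ is a proper (possibly empty) prefix of $\tau(x_m)$; iterating the construction on the shorter prefix $x_{[0, m-1]}$ produces a unique decomposition
\[ x_{[0, N-1]} = \tau^{K-1}(v_{K-1}) \, \tau^{K-2}(v_{K-2}) \cdots \tau(v_1)\, v_0, \]
where each $v_k$ is a proper prefix of $\tau(b_k)$ for some $b_k \in A$. In particular $\|\ell(v_k)\|$ is uniformly bounded by a constant $B$ depending only on $\tau$, and $K = O(\log_\lambda N)$.

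Applying $\ell$ to this decomposition gives $\ell(x_{[0,N-1]}) = \sum_{k=0}^{K-1} (M^T)^k \ell(v_k)$, and applying $\mathbf{1}^T$ to both sides yields $N = \sum_{k=0}^{K-1} \mathbf{1}^T (M^T)^k \ell(v_k)$. Hence
\[ \ell(x_{[0, N-1]}) - N\mu = \sum_{k=0}^{K-1} (I - \mu \mathbf{1}^T)(M^T)^k \ell(v_k). \]
Because $\mathbf{1}^T \mu = 1$, the operator $I - \mu \mathbf{1}^T$ kills $\mu$ and therefore also the projector $P$, so that $(I - \mu \mathbf{1}^T)(M^T)^k = (I - \mu \mathbf{1}^T) S^k$. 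Combining $\|S^k\| \leq C_1 \theta^k$ with $\|\ell(v_k)\| \leq B$ then bounds the right-hand side above by an absolutely convergent geometric series, whose sum is a constant $C$ depending only on $\tau$. Extracting coordinate $a$ gives $\bigl|\,|x_{[0, N-1]}|_a - N \mu_a\,\bigr| \leq C$ for every $N$, and dividing by $N$ shows as a byproduct that the stated limit $\mu(a) = \lim_N |x_{[0, N-1]}|_a / N$ exists and agrees with the Perron coordinate $\mu_a$.

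The main obstacle I expect is establishing existence and uniqueness of the Dumont--Thomas decomposition cleanly, in particular handling the edge cases where some $v_k$ is empty or where $m$ drops to $0$; once the decomposition is in hand, the analytic part of the argument is the straightforward geometric bound afforded by the Pisot spectral gap.
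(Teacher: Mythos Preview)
The paper does not give its own proof of this lemma: it is quoted verbatim as ``Part of Corollary~2 in \cite{Ad04}'' and used as a black box, so there is nothing in the paper to compare your argument against.

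That said, your argument is correct and is the standard route to this result. The Dumont--Thomas prefix decomposition is well founded here: since $x$ is a fixed point one has $|\tau(x_0)| \geq 2$, so the passage $N \mapsto m$ is strictly decreasing and terminates at $m = 0$; uniqueness is immediate from the description. The spectral step is also fine: writing $M^T = \lambda P + S$ with $P$ the Perron projector and $S = M^T(I-P)$, the operator $S$ has spectrum $\{0\} \cup \{\lambda_i : i > 1\}$, hence spectral radius $< 1$ by the Pisot hypothesis, and $(I - \mu\mathbf{1}^T)P = 0$ because $\mathbf{1}^T\mu = 1$. One cosmetic point: the identity $(M^T)^k = \lambda^k P + S^k$ holds for $k \geq 1$ but not literally for $k = 0$ (where $(M^T)^0 = I = P + (I-P)$), so the $k = 0$ summand should be bounded separately by $\|I - \mu\mathbf{1}^T\|\,B$; this does not affect the conclusion. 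Your closing remark about ``obstacles'' is overly cautious --- the decomposition is routine once $|\tau(x_0)| \geq 2$ is noted.
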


Actually, Corollary 2 of \cite{Ad04} completely characterizes the substitutions for which the conclusion holds, and in particular shows that they form a larger class than Pisot substitutions. However, the condition is quite involved, so we do not state it here. The balanced growth of primitive Pisot substitutions is an easy corollary of this lemma.

\begin{lemma}
\label{lem:Balanced}
Every primitive Pisot substitution $\tau : X_\tau \to X_\tau$ has balanced growth.
\end{lemma}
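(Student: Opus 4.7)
The plan is to recall that $Z(\tau)=\lambda$ is already furnished by Lemma~\ref{lem:ZExists}, so the only work is showing that $D(\tau)$ is well-defined. The implementation of $\tau$ as a \dill{} map is simply $\phi(x)=\tau(x_0)$, so
\[ |(^n\phi)(x)| \;=\; \sum_{i=0}^{n-1}|\tau(x_i)| \;=\; \sum_{a\in A}|x_{[0,n-1]}|_a\,|\tau(a)|, \]
and the task reduces to bounding this length against $\lambda n$ uniformly in $x\in X_\tau$ and $n\in\N$.

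First I would establish the frequency identity $\sum_{a\in A}\mu(a)|\tau(a)|=\lambda$, where $\mu$ is the frequency vector of Lemma~\ref{lem:PisotTheorem}. This follows from Perron--Frobenius: $\mu$ is the positive left eigenvector of the associated matrix $M$ (normalized to a probability vector), so $\sum_a \mu(a)M_{a,b}=\lambda\mu(b)$, whence $\sum_a\mu(a)|\tau(a)|=\sum_a\mu(a)\sum_b M_{a,b}=\lambda\sum_b\mu(b)=\lambda$.

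Next I would upgrade the bound of Lemma~\ref{lem:PisotTheorem}, which is stated only for a fixed point $x^*$, to a uniform bound over all $x\in X_\tau$. Since $\tau$ is primitive, $X_\tau$ is the orbit closure of $x^*$, so any factor $x_{[0,n-1]}$ of $x\in X_\tau$ occurs in $x^*$ as $x^*_{[k,k+n-1]}$ for some $k\in\N$. Writing
\[ |x_{[0,n-1]}|_a \;=\; |x^*_{[0,k+n-1]}|_a - |x^*_{[0,k-1]}|_a, \]
and applying Lemma~\ref{lem:PisotTheorem} to both terms, the constants $C$ cancel the $\mu(a)$-multiples of $k$ and $k+n$ up to an additive error of $2C$, giving $\bigl||x_{[0,n-1]}|_a-n\mu(a)\bigr|\le 2C$ uniformly.

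Combining the two ingredients,
\[ |(^n\phi)(x)| \;=\; \sum_a\bigl(n\mu(a)+\varepsilon_a(n,x)\bigr)|\tau(a)| \;=\; \lambda n + \sum_a\varepsilon_a(n,x)|\tau(a)|, \]
with $|\varepsilon_a(n,x)|\le 2C$. Setting $D=2C\,|A|\,\max_{a\in A}|\tau(a)|$ gives $\bigl||(^n\phi)(x)|-\lambda n\bigr|\le D$ for every $x\in X_\tau$ and every $n\in\N$, showing that $D(\tau)$ is well-defined and completing the proof. The only non-routine point is the passage from Lemma~\ref{lem:PisotTheorem} (a statement about one fixed point) to a uniform bound on all of $X_\tau$, but the telescoping argument above handles it cleanly thanks to primitivity.
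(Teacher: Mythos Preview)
Your proof is correct and follows the same overall strategy as the paper: invoke Lemma~\ref{lem:ZExists} for $Z(\tau)$, extend the discrepancy bound of Lemma~\ref{lem:PisotTheorem} from a single fixed point to all of $X_\tau$ via the telescoping argument, and convert bounded letter-count discrepancies into a uniform bound on $\bigl||(^n\phi)(x)|-\lambda n\bigr|$.

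The one genuine difference is in the last step. You compute directly, using the Perron--Frobenius identity $\sum_a \mu(a)|\tau(a)|=\lambda$ to pin $|(^n\phi)(x)|$ to $\lambda n$. The paper instead avoids this computation: it only bounds $\bigl||(^n\tau)(x)|-|(^n\tau)(y)|\bigr|$ uniformly in $x,y$, and then argues that since $\frac{|(^n\tau)(x)|}{n}\to Z(\tau)$, for each $n$ there must exist points $x,y$ with $|(^n\tau)(x)|\le Z(\tau)n\le |(^n\tau)(y)|$, so every value is within the common bound of $Z(\tau)n$. Your route is more explicit and gives a concrete constant $D$; the paper's sandwich argument is shorter but leaves that existence step (``it is easy to see'') to the reader. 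Both are valid, and your version arguably fills in more detail than the paper does.
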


\begin{proof}
By Lemma~\ref{lem:ZExists}, $Z(\tau)$ is well-defined. By Lemma~\ref{lem:PisotTheorem}, apart from a uniformly bounded error, $x_{[0, n-1]}$ and $y_{[0, n-1]}$ contain the same number of each letter for all $x, y \in X_\tau$ and $n \in \N$. It is then easy to see that for some $C > 0$, we have
\[ \big| |(^n \tau)(x)| - |(^n \tau)(y)| \big| = \big| |\tau(x_{[0, n-1]})| - |\tau(y_{[0, n-1]})| \big| < C \]
for all $n \in \N$ and $x, y \in X_\tau$. Since the limit of $\frac{|(^n \tau)(x)|}{n}$ is $Z(\tau)$, it is easy to see that for each $n$, there exist points $x, y$ such that $|(^n \tau)(x)| \leq Z(\tau)n \leq |(^n \tau)(y)|$. From this, the claim follows. \qed
\end{proof}

Not all primitive substitutions have balanced growth, even in the binary case, as shown by the following example, which uses the results of \cite{Ad04}.

\begin{example}
Let $\tau$ be the substitution on the binary alphabet defined by $\tau(0) = 0001$ and $\tau(1) = 110$. Then $\tau$ is primitive, and its eigenvalues are $\frac{5 \pm \sqrt{5}}{2}$. Both eigenvalues are strictly greater than $1$, and thus \cite[Theorem 1]{Ad04} implies that there exist pairs of words $u, v \sqsubset X_\tau$ of the same length such that the difference $\big| |u|_0 - |v|_0 \big|$ is arbitrarily large. But then the difference of lengths $\big| |\tau(u)| - |\tau(v)| \big|$ is also arbitrarily large, so that $\tau$ does not have balanced growth.
\end{example}

Next, we define the notions of almost equivalence and almost inverse for \dill{} maps.

\begin{definition}
Let $\Phi, \Psi : X \to Y$ be \dill{} maps with $X$ uniformly recurrent. If
\[ \forall x \in X: \exists i, j \in \N: \sigma^i(\Phi(x)) = \sigma^j(\Psi(x)), \]
then we write $\Phi \sim \Psi$, and say $\Phi$ and $\Psi$ are \emph{almost equivalent}. If $\Phi_1 : X \to Y$, $\Phi_2 : Y \to X$ and $\Phi_2 \circ \Phi_1 \sim \ID_X$, then we say $\Phi_2$ is an \emph{almost left inverse} of $\Phi_1$, and it is an \emph{almost right inverse} if $\Phi_1 \circ \Phi_2 \sim \ID_Y$. An almost right and almost left inverse is called an \emph{almost inverse}. A \dill{} map is \emph{almost invertible} if it has an almost inverse.
\end{definition}

To make sure that it is safe to talk about \dill{} maps `up to almost equivalence', we need to check that almost equivalence is an equivalence relation, and behaves well under composition of \dill{} maps. We omit the (easy) proof.

\begin{lemma}
\label{lem:AlmoIsNice}
Almost equivalence is an equivalence relation, that is,
\[ \Phi_1 \sim \Phi_2 \wedge \Phi_2 \sim \Phi_3 \implies \Phi_1 \sim \Phi_3. \]
Almost equivalence is a congruence with respect to composition, that is,
\[ \Phi_1 \sim \Psi_1 \wedge \Phi_2 \sim \Psi_2 \implies \Phi_2 \circ \Phi_1 \sim \Psi_2 \circ \Psi_1. \]
\end{lemma}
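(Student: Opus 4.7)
The plan is to verify the two claims separately, using the orbit-preserving nature of dill maps (via their cocycles) as the central tool for the congruence part.

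For transitivity, given $\sigma^{i_1}(\Phi_1(x)) = \sigma^{j_1}(\Phi_2(x))$ and $\sigma^{i_2}(\Phi_2(x)) = \sigma^{j_2}(\Phi_3(x))$ for each $x \in X$, I would simply apply $\sigma^{i_2}$ to the first equation and $\sigma^{j_1}$ to the second, and note that $\sigma$ commutes with itself, to conclude $\sigma^{i_1 + i_2}(\Phi_1(x)) = \sigma^{j_1 + j_2}(\Phi_3(x))$. Reflexivity ($i = j = 0$) and symmetry (swap the roles of $i$ and $j$) are also immediate, though the lemma only asks us to verify transitivity explicitly.

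For the congruence statement, my plan is to use transitivity to reduce it to two one-sided substitutions: first $\Phi_2 \circ \Phi_1 \sim \Psi_2 \circ \Phi_1$, and then $\Psi_2 \circ \Phi_1 \sim \Psi_2 \circ \Psi_1$. The first of these is immediate by applying the almost equivalence $\Phi_2 \sim \Psi_2$ to the point $y = \Phi_1(x) \in Y$. The second is the interesting step: from $\sigma^i(\Phi_1(x)) = \sigma^j(\Psi_1(x))$ I would apply $\Psi_2$ to both sides and then push the inner shifts out using the nice cocycle $s_2$ of $\Psi_2$. Concretely, setting $S_2(n,y) = \sum_{k=0}^{n-1} s_2(\sigma^k(y))$, iteration of the cocycle identity yields
\[
\Psi_2(\sigma^n(y)) = \sigma^{S_2(n,y)}(\Psi_2(y)),
\]
so that $\sigma^{S_2(i,\Phi_1(x))}(\Psi_2(\Phi_1(x))) = \sigma^{S_2(j,\Psi_1(x))}(\Psi_2(\Psi_1(x)))$, which is precisely the required almost equivalence.

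There is no serious obstacle here: the proof is really just bookkeeping with cocycles. The only conceptual point worth highlighting is that the factorization through $\Psi_2 \circ \Phi_1$ lets us avoid dealing with the cocycles of $\Phi_1, \Psi_1, \Phi_2, \Psi_2$ simultaneously, and isolates the one place where the dill structure (as opposed to bare set-theoretic properties of $\sim$) is actually used, namely in pushing shifts through $\Psi_2$.
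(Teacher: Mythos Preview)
Your proposal is correct and matches the paper's own (omitted, but present in the source as a comment) argument: transitivity is handled identically, and the congruence part likewise reduces to pushing inner shifts through $\Phi_2$ and $\Psi_2$ via their cocycles. The only cosmetic difference is that the paper does the congruence in one stroke---applying $\Phi_2 \sim \Psi_2$ at the common point $\sigma^{i_1}(\Phi_1(x)) = \sigma^{j_1}(\Psi_1(x))$ and then invoking the dill property of both outer maps---whereas you factor through $\Psi_2 \circ \Phi_1$ using transitivity; the content is the same.
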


\begin{comment}
\begin{proof}
If $\sigma^{i_1}(\Phi_1(x)) = \sigma^{j_1}(\Phi_2(x))$ and $\sigma^{i_2}(\Phi_2(x)) = \sigma^{j_2}(\Phi_3(x))$, then
\[ \sigma^{i_1 + i_2}(\Phi_1(x)) = \sigma^{j_1 + i_2}(\Phi_2(x)) = \sigma^{j_1 + j_2}(\Phi_3(x)), \]
so this is an equivalence relation.

We also need to show that if $\Phi_1 \sim \Psi_1$ and $\Phi_2 \sim \Psi_2$, then $\Phi_2 \circ \Phi_1 = \Psi_2 \circ \Psi_1$. Let $\sigma^{i_1}(\Phi_1(x)) = \sigma^{j_1}(\Psi_1(x))$. The claim follows from
\[ \sigma^{j_2}(\Phi_2(\sigma^{i_1}(\Phi_1(x)))) = \sigma^j(\Psi_2(\sigma^{j_1}(\Psi_1(x))) \]
and the fact that $\Phi_2$ and $\Psi_2$ are dill maps. \qed
\end{proof}
\end{comment}

Next, we use the recognizability of primitive substitutions to obtain almost inverse \dill{} maps for them.

\begin{lemma}
\label{lem:InvertibleAndBalanced}
A primitive aperiodic substitution $\tau$ on an alphabet $A$ is almost invertible as a \dill{} map on $X_\tau$. If $\tau$ has balanced growth, then it has an almost inverse with balanced growth.
\end{lemma}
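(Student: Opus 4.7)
The plan is to construct the almost inverse $\Psi : X_\tau \to X_\tau$ directly from strict recognizability. By Lemma~\ref{lem:Mosse}, $\tau$ is strictly recognizable, so Lemma~\ref{lem:RecogToCA} provides a block map $R_\tau$ of some radius $r$ which, at every position $i \geq r$ of any $y \in X_\tau$, outputs the preimage letter of the block starting at $i$ if $i$ is a cutting point, and $\#$ otherwise. I would define the implementation $\psi : X_\tau \to A^*$ by letting $\psi(z) \in A$ be the value of the local rule of $R_\tau$ at position $r$ of $z$ when this is not $\#$, and $\psi(z) = \epsilon$ otherwise; then $\Psi(y) = \psi(y)\psi(\sigma(y))\psi(\sigma^2(y))\cdots$ reads off the preimage letters at each cutting point of $y$ at positions $\geq r$.

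I would then verify that $\Psi$ is a \dill{} map with image in $X_\tau$. Continuity and the existence of the cocycle $s(y) = |\psi(y)| \in \{0, 1\}$ are immediate from $R_\tau$ being a block map, while the nontriviality condition~\eqref{eq:Nontrivial} holds because any $y \in X_\tau$ has infinitely many cutting points, by minimality together with the existence of cutting points in any fixed point. The fact that $\Psi(y) \in X_\tau$ follows from strict recognizability, since $\Psi(y)$ is a shift of the unique $x \in X_\tau$ with $y \in \orb(\tau(x))$. For almost invertibility, I would compute both compositions: $\Psi \circ \tau(x)$ reads off preimages of $\tau(x)$ starting from the first cutting point $E_x(k) \geq r$, giving $\sigma^k(x) \in \orb(x)$; and writing $y$ so that $\sigma^p(\tau(x))$ agrees with $y$ beyond the boundary, a symmetric count shows $\tau \circ \Psi(y) = \sigma^j(y)$ for some $j \geq 0$. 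Both compositions are therefore $\sim \ID$.

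For the balanced-growth claim, assuming now that $D(\tau)$ is well-defined, I would observe that $|(^n\psi)(y)|$ is exactly the number of cutting points of $y$ in the window $[r, r+n-1]$. If these cutting points occur at positions $q_0 < q_1 < \cdots < q_{k-1}$ in this window, then $q_j - q_0 = |\tau(w_{[0, j-1]})|$ for the preimage sequence $w$ read off by $\Psi$. By $D(\tau)$ being well-defined with $\lambda = Z(\tau)$, this equals $\lambda j \pm D(\tau)$, and inverting the inequality $q_{k-1} \leq r + n - 1 < q_k$ yields $k = n/\lambda + O(1)$ with a constant depending only on $\tau$ and $r$. This gives $Z(\Psi) = 1/\lambda$ and a finite $D(\Psi)$, so $\Psi$ itself has balanced growth.

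The main obstacle I expect is the careful handling of the one-sided boundary: the block map $R_\tau$ is originally described on the two-sided subshift and is only guaranteed to behave correctly beyond some initial segment, so some care is needed to ensure that the implementation $\psi$ above is continuous everywhere on $X_\tau \subset A^\N$, and to track the finite offsets introduced by the fixed prefix of length $r$ when verifying $\Psi \circ \tau \sim \ID$ and $\tau \circ \Psi \sim \ID$. Once this bookkeeping is in place, the counting estimates for $Z(\Psi)$ and $D(\Psi)$ are routine consequences of the definition of $D(\tau)$.
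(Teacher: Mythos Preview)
Your proposal is correct and follows essentially the same approach as the paper: construct the almost inverse from the strict-recognizability block map $R_\tau$ (shifted by its radius $r$ to make it one-sided), and then deduce balanced growth of the inverse from balanced growth of $\tau$. The only cosmetic difference is that the paper phrases the balanced-growth step via the observation that $\tau \circ \tau^{-1}$ has an implementation with $|({}^n\psi)(x)| \in [n-C,n+C]$, whereas you count cutting points directly; these are the same computation.
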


\begin{proof}
First, $\tau$ is strictly recognizable by Lemma~\ref{lem:Mosse}. Let $R : X^\leftrightarrow_\tau \to (A \cup \{\#\})^\Z$ be the block map given by Lemma~\ref{lem:RecogToCA}, and let $r \in \N$ be its (two-sided) radius. Then the neighborhood of $\sigma^r \circ R$ contains only nonnegative elements, so $f_R = \sigma^r \circ R$ can be thought of as a block map from $X_\tau$ to $(\{\#\} \cup A)^\N$. Define $\phi : X_\tau \to A^*$ by
\[ \phi(x) = \left\{\begin{array}{ll}
a, & \mbox{if } f_R(x)_0 = a \in A \\
\epsilon, & \mbox{if } f_R(x)_0 = \#
\end{array}\right. \]
It is easy to see that $\phi$ is the implementation of a \dill{} map $\tau^{-1} = \INF \phi$, which is an almost inverse of $\tau$.

By the way we defined $\tau^{-1}$, the composition $\tau \circ \tau^{-1}$ has an implementation $\psi : X_\tau \to A^*$ such that there exists $C \geq 0$ with $|(^n \psi)(x)| \in [n-C, n+C]$ for all $x \in X_\tau$ and $n \in \N$. It is then easy to see that $\tau^{-1}$ has balanced growth if $\tau$ does.
\qed
\end{proof}

Intuitively, the almost inverse \dill{} map $\tau^{-1}$ of a primitive substitution $\tau$ functions as follows on a configuration $x \in X_\tau$. Since $\tau$ is strictly recognizable, there exist $n \in \N$ and $y \in X_\tau$ such that $\sigma^n(x) = \tau(y)$, where $n$ is bounded by some $r \in \N$, and $y$ is unique once $n$ is determined. This means that the tail $\sigma^n(x)$ is divided into the $\tau$-images of letters of $y$. The implementation of $\tau^{-1}$ simply determines whether $x_r$ lies at the beginning of some image $\tau(y_i)$, giving the letter $y_i$ if this is the case, and giving the empty word $\epsilon$ otherwise. By choosing $n$ as close to $r$ as possible, we then have $\tau^{-1}(x) = y$.

As an example, we give an almost inverse of the Thue-Morse substitution.

\begin{example}
\label{ex:ThueMorseInvertible}
Let $\tau$ be the Thue-Morse substitution, and let the implementation $\tau^{-1} : X_\tau \to \{0,1\}^*$ be defined by
\[ \begin{array}{cc}
\tau^{-1}(00101 \ldots) = \epsilon,	& \tau^{-1}(00110 \ldots) = \epsilon \\
\tau^{-1}(01001 \ldots) = \epsilon,	& \tau^{-1}(01011 \ldots) = 0 \\
\tau^{-1}(01100 \ldots) = 0,		& \tau^{-1}(01101 \ldots) = 0 \\
\tau^{-1}(10010 \ldots) = 1,		& \tau^{-1}(10011 \ldots) = 1 \\
\tau^{-1}(10100 \ldots) = 1,		& \tau^{-1}(10110 \ldots) = \epsilon \\
\tau^{-1}(11001 \ldots) = \epsilon,	& \tau^{-1}(11010 \ldots) = \epsilon \\
\end{array} \]
One can check that $\tau \circ (^\infty \tau^{-1}) \sim \ID_{X_\tau} = (^\infty \tau^{-1}) \circ \tau$, so that ${}^\infty \tau^{-1}$ is an almost inverse for $\tau$. \qed
\end{example}

%The following is a generalization of Corollary~\ref{cor:KPowerFreeness} from the uniform case.

Finally, we characterize the almost equivalence of block maps on our subshifts of interest.

\begin{lemma}
\label{lem:Almo}
Let $f, g : X \to Y$ be almost equivalent block maps, where $X$ is uniformly recurrent. Then $g \circ \sigma^k = f$ or $g = f \circ \sigma^k$ for some $k \in \N$.
\end{lemma}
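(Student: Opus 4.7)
The plan is to use minimality of $X$ to first uniformize the pair of shifts witnessing almost equivalence, and then to invoke uniform recurrence a second time in order to promote a partial agreement into full equality.

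For the uniformization, I would fix any $x_0 \in X$ and choose $i_0, j_0 \in \N$ with $\sigma^{i_0}(f(x_0)) = \sigma^{j_0}(g(x_0))$. Shift-commutativity of $f$ and $g$ propagates the identity along the entire orbit of $x_0$, and since $X$ is minimal this orbit is dense, so continuity of $f$, $g$ and $\sigma$ yields $\sigma^{i_0} \circ f = \sigma^{j_0} \circ g$ on all of $X$ with a single pair of exponents.

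One would like to cancel $\sigma^{j_0}$ at this point, but $\sigma$ is not injective on $A^\N$, which is the main obstacle of the proof. Assume without loss of generality that $i_0 \geq j_0$ (the case $j_0 > i_0$ is symmetric) and set $k = i_0 - j_0$; the uniform identity then records $g(x)_n = f(x)_{n+k}$ for every $x \in X$ and every $n \geq j_0$, and the task is to extend this to all $n \geq 0$. Let $r$ be a common radius of $f$ and $g$. Uniform recurrence of $X$ supplies $p \geq j_0$ with $x_{[p,\, p + i_0 + r]} = x_{[0,\, i_0 + r]}$. For $0 \leq n < j_0$, the length-$(r+1)$ local-rule windows $[n, n+r]$ and $[n+k, n+k+r]$ both lie inside $[0, i_0 + r]$, so the repetition forces $g(x)_n = g(x)_{p+n}$ and $f(x)_{n+k} = f(x)_{p+n+k}$, while $p+n \geq j_0$ activates the already-established relation $g(x)_{p+n} = f(x)_{p+n+k}$; chaining the three equalities delivers $g(x)_n = f(x)_{n+k}$.

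Hence $g(x) = \sigma^k(f(x))$ on $X$, and the shift-commutativity of $f$ rewrites this as $g = f \circ \sigma^k$; the mirror case produces the alternative $f = g \circ \sigma^{k'}$ with $k' = j_0 - i_0$. The delicate point is squarely the second step: uniformization is essentially immediate from minimality and gives a pair of exponents that works globally, but the failure of injectivity of $\sigma$ on $A^\N$ means that recovering equality on the initial segment requires carefully replaying the agreement from far-out positions back onto small ones via a long local repetition guaranteed by uniform recurrence.
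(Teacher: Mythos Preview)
Your proof is correct, but it takes a somewhat longer route than the paper's. The paper notices that after replacing the reference point by a shift of itself, one may assume from the start that $f(x) = \sigma^k(g(x))$ holds \emph{exactly} (no residual shift on the $f$ side): if $\sigma^{i_0}(f(x_0)) = \sigma^{j_0}(g(x_0))$ with, say, $i_0 \leq j_0$, then setting $x = \sigma^{i_0}(x_0)$ gives $f(x) = \sigma^{j_0 - i_0}(g(x))$. From here the paper never globalizes via density; instead, writing $h = \sigma^k \circ g$, it compares $f(y)_0$ and $h(y)_0$ directly for arbitrary $y$ by locating the window $y_{[0,r]}$ inside $x$ (uniform recurrence) and reading off the known equality $f(x) = h(x)$ at that position. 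Shift-commutativity then upgrades equality at coordinate $0$ to equality everywhere.

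Your two-stage argument---first pass to a global identity $\sigma^{i_0}\circ f = \sigma^{j_0}\circ g$ by density of the orbit, then repair the initial $j_0$ coordinates via a long repetition---works perfectly well and is a natural way to think about the problem. What the paper's shortcut buys is that the non-injectivity of $\sigma$ never becomes an issue: by arranging $i_0 = 0$ at a single point, the comparison is between $f$ and $h$ rather than between $\sigma^{i_0}\circ f$ and $\sigma^{j_0}\circ g$, so there is nothing to cancel and the second application of uniform recurrence is unnecessary. Your method, on the other hand, makes the role of minimality (orbit closure equals $X$) more visible in the first step.
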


\begin{proof}
By symmetry and the fact that block maps commute with shifts, we may choose $x \in X$ such that $f(x) = \sigma^k(g(x))$ for some $k \in \N$. Denote $h = \sigma^k \circ g$. Let $y \in X$ be arbitrary, and let $r \in \N$ be a common radius for $f$ and $h$. Since $X$ is uniformly recurrent, there exists $i \in \N$ such that $y_{[0, r-1]} = x_{[i, i+r-1]}$. But then
\begin{align*}
f(y)_0 &= f(y_{[0, r-1]}) = f(x_{[i, i+r-1]}) = f(x)_i \\
&= h(x)_i =h (x_{[i, i+r-1]}) = h(y_{[0, r-1]}) = h(y)_0,
\end{align*}
which implies $f(y) = h(y)$, since $h$ commutes with $\sigma$. Since $y \in X$ was arbitrary, we have $f = h$. \qed
\end{proof}

\begin{comment}
\begin{proof}
Suppose $f$ and $g$ both have radius $r \in \N$. By symmetry, we may choose $x \in X$ such that $f(x) = \sigma^k(g(x))$ for some $k \in \N$. We show that $f = \sigma^k \circ g$. If this is not the case, then there exists $y \in X$ such that either $f(y) = \sigma^m(g(y))$ for some $m \neq k$, or $\sigma^m(f(y)) = g(y)$ for some $m \in \N$. We assume that $f(y) = \sigma^m(g(y))$ and $m < k$, the other cases being similar.

Let $n \geq k - m$ be arbitrary. Since $X$ is uniformly recurrent, there exists $i \in \N$ such that $y_{[i, i+n+k+r]} = x_{[0, n+k+r]}$. Then,
\[ g(y)_{[i+m,i+n+m]} = f(y)_{[i,i+n]} = f(x)_{[0,n]} = g(x)_{[k,n+k]} = g(y)_{[i+k,i+n+k]}. \]
Then $g(y)$ contains an occurrence of the word $u^p$, where $u = g(y)_{[i+m,i+k]}$ and $p \geq \frac{n}{k - m} + 1$. In particular, we obtain arbitrarily large powers of words in $g(y) \in Y$, contradicting the bounded powers of $Y$. \qed
\end{proof}
\end{comment}

In particular, the above lemma holds when $X$ and $Y$ are subshifts of some primitive substitutions by Lemma~\ref{lem:MossePower}.

Lemma~\ref{lem:Almo} does not apply in the case that $f \sim \Phi$, where $f$ is a block map and $\Phi$ a general \dill{} map. For example, on the Thue-Morse shift, $\Phi$ could behave differently depending on whether the input point is of the form $\tau(x)$ or $\sigma(\tau(x))$ for some $x \in X_\tau$.

\section{Invariants of \Dill{} Maps between Substitutions}
\label{sec:Conjugation}

In this section, we generalize the conjugation argument of Proposition~\ref{prop:UniformCase} to all primitive substitutions with balanced growth. For the almost inverse \dill{} map $\tau^{-1}$ of a substitution $\tau$ and $j \in \N$, we denote $\tau^{-j} = (\tau^{-1})^j$.

\begin{example}
We continue Example~\ref{ex:ThueMorseInvertible}. The endomorphisms of the Thue-Morse subshift $X_\tau$ are characterized in \cite{Co71}, and they are the shift maps, possibly composed with a bit flip. We show that the process of repeated conjugation eventually sends each such map into a finite set of maps. First, we consider the shift maps and show that
\[ \tau^{-1} \circ \sigma^n \circ \tau = \sigma^{\lceil n/2 \rceil} \]
for all $n \in \N$. Then we have $\tau^{-j} \circ \sigma^n \circ \tau^j \in \{\ID_{X_\tau}, \sigma\}$ for large enough $j \in \N$. The odd case $n = 2k + 1$ is the more interesting one, and we have
\begin{align*}
x = x_0 x_1 x_2 x_3 \cdots &\stackrel{\tau}{\mapsto} x_0 \overline{x_0} x_1 \overline{x_1} x_2 \overline{x_2} x_3 \overline{x_3} \cdots \\
&\stackrel{\sigma^n}{\mapsto} \overline{x_k} x_{k+1} \overline{x_{k+1}} x_{k+2} \overline{x_{k+2}} x_{k+3} \overline{x_{k+3}} \cdots \\
&\stackrel{\tau^{-1}}{\mapsto} \epsilon \cdot x_{k+1} \cdot \epsilon \cdot x_{k+2} \cdot \epsilon \cdot x_{k+3} \cdot \epsilon \cdots \\
&= \sigma^{k+1}(x).
\end{align*}
As for the bit flip $g : X_\tau \to X_\tau$, a similar computation shows that
\[ \tau^{-1} \circ \sigma^n \circ g \circ \tau = \sigma^{\lceil n/2 \rceil} \circ g, \]
so that for any endomorphism $f$ of the Thue-Morse subshift $X_\tau$, we have
\[ \tau^{-j} \circ f \circ \tau^j \in \{\ID_{X_\tau}, g, \sigma, \sigma \circ g\}\]
for large enough $j \in \N$.
\qed
\end{example}

\emph{For the rest of this section, suppose $\tau$ and $\rho$ are primitive substitutions with balanced growth and $\lambda = Z(\tau) = Z(\rho) > 1$.}

By Lemma~\ref{lem:InvertibleAndBalanced}, there exists an almost inverse $\rho^{-1}$ for $\rho$ that also has balanced growth. Let $f : X_\tau \to X_\rho$ be a block map, which we consider as a \dill{} map. We will repeatedly `conjugate' $f$ with $\tau$ and $\rho$, obtaining a sequence $(\Phi_i)_{i \in \N}$ with $\Phi_0 = f$ and $\Phi_{i+1} = \rho^{-1} \circ \Phi_i \circ \tau$, and observe the invariants $Z$, $D$, and $I$ of the $\Phi_i$. Thinking of $\tau$ and $\rho$ as fixed, we show the following evolution for the invariants:
\begin{itemize}
\item $Z(\Phi_i)$ stays at $1$,
\item $D(\Phi_i)$ stays bounded by a constant,
\item $I(\Phi_i)$ decreases until it is bounded by a constant.
\end{itemize}
This is the content of the following lemma.

\begin{lemma}
\label{lem:Conjugation}
There exist $D = D_{\tau, \rho} \in \N$ and $I = I_{\tau, \rho} \in \N$ with the following properties. For any block map $f : X_\tau \to X_\rho$, denoting $\Phi_i = \rho^{-i} \circ f \circ \tau^i$, we have $Z(\Phi_i) = 1$ and $D(\Phi_i) \leq D$ for all $i \in \N$, and $I(\Phi_i) \leq I$ for all large enough $i \in \N$.
\end{lemma}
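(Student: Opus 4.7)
The plan is to iterate the one-step identity $\Phi_{i+1} = \rho^{-1} \circ \Phi_i \circ \tau$ and bound the three invariants using Lemma~\ref{lem:Threes}, with $\Phi_1 = \tau$, $\Phi_2 = \Phi_i$ and $\Phi_3 = \rho^{-1}$ playing the three roles. By Lemma~\ref{lem:InvertibleAndBalanced}, we may choose $\rho^{-1}$ of balanced growth, so all invariants of $\tau$ and $\rho^{-1}$ are finite constants depending only on $\tau,\rho$. First I would handle $Z$: the multiplicativity formula \eqref{eq:CompZ} together with $\rho \circ \rho^{-1} \sim \ID$ forces $Z(\rho^{-1}) = 1/\lambda$, and since any block map $f$ trivially satisfies $Z(f) = 1$, we get $Z(\Phi_i) = (1/\lambda)^i \cdot 1 \cdot \lambda^i = 1$ for every $i$. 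This ensures the hypothesis $Z_2 = 1$ needed to invoke Lemma~\ref{lem:Threes} at every iteration.

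For $D$, the critical input is that $D(f) = 0$ for any block map: its natural implementation outputs a single letter, so $|({}^n\phi)(x)| = n$ identically. Lemma~\ref{lem:Threes} with $Z_3 = 1/\lambda$ then yields
\[ D(\Phi_{i+1}) \le \tfrac{1}{\lambda} D(\Phi_i) + C_1, \]
where $C_1 = C(\tau,\rho^{-1})$ depends only on $\tau,\rho$. Starting from $D(\Phi_0) = 0$, a geometric-series induction (or Lemma~\ref{lem:SublinearContribution}) gives the global bound $D(\Phi_i) \le D := \lambda C_1/(\lambda - 1)$ for all $i \in \N$, uniformly in $f$.

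With this $D$-bound in hand, Lemma~\ref{lem:Threes} also provides
\[ I(\Phi_{i+1}) \le \tfrac{1}{\lambda}\bigl(I(\Phi_i) + 2 D(\Phi_i)\bigr) + C_2 \le \tfrac{1}{\lambda} I(\Phi_i) + \tfrac{2D}{\lambda} + C_2, \]
with $C_2 = C(\tau,\rho^{-1})$ again depending only on $\tau,\rho$. Lemma~\ref{lem:SublinearContribution} applied with $a = 1/\lambda \in [0,1)$ and $b = 2D/\lambda + C_2$ then delivers $I(\Phi_i) \le I := \alpha(1/\lambda, 2D/\lambda + C_2)$ for all sufficiently large $i$. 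The main obstacle is making $D$ and $I$ independent of $f$: the initial value $I(\Phi_0)$ is essentially the radius of $f$ and is arbitrary, and one might worry that $D(\Phi_i)$ inherits this. Both concerns dissolve thanks to the structural facts that $D$ of a block map is zero (so the $D$-recursion starts already controlled) and that both recursions are strict contractions by $1/\lambda < 1$ with bounded additive error, so in each case the eventual bound depends only on $\tau$ and $\rho$. The remainder of the argument is a routine unfolding of Lemma~\ref{lem:Threes}.
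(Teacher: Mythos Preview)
Your proposal is correct and follows essentially the same argument as the paper: both iterate $\Phi_{i+1} = \rho^{-1} \circ \Phi_i \circ \tau$, invoke Lemma~\ref{lem:Threes} with $(\Phi_1,\Phi_2,\Phi_3) = (\tau,\Phi_i,\rho^{-1})$, use $Z(f)=1$ and $D(f)=0$ as the base case, and then appeal to Lemma~\ref{lem:SublinearContribution} for the contraction recursions in $D$ and $I$. Your slightly more explicit derivation of $Z(\rho^{-1}) = 1/\lambda$ and the closed-form geometric bound for $D$ are minor presentational differences, not a different route.
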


\begin{proof}
The first claim follows directly from equation~\eqref{eq:CompZ} of Lemma~\ref{lem:InvariantsInComposition} and the observation $Z(f) = 1$. For the second claim, we first note that $D(\Phi_0) = 0$, since $\Phi_0 = f$ is a block map. From Lemma~\ref{lem:Threes}, we then obtain
\[ D(\Phi_{i+1}) \leq \lambda^{-1} D(\Phi_i) + \lambda^{-1} D(\tau) + D(\rho^{-1}), \]
so the claim follows from Lemma~\ref{lem:SublinearContribution}. Finally, Lemma~\ref{lem:Threes} also implies that
\[
I(\Phi_{i+1}) \leq \lambda^{-1} I(\Phi_i) + 2 \lambda^{-1} D(\Phi_i) + C,
\]
where $C > 0$ only depends on $\tau$ and $\rho$. Since we have already proved that $D(\Phi_i)$ is bounded, the final claim follows from Lemma~\ref{lem:SublinearContribution}.
\qed
\end{proof}

We now put the invariants into use and show that when they are all bounded, there are only finitely many choices for the corresponding \dill{} map.

\begin{lemma}
\label{lem:SmallImplementation}
Let $X, Y \subset A^\N$ be subshifts, of which $Y$ is aperiodic. For any $Z, D, I \in \N$, there are only finitely many \dill{} maps $\Phi : X \to Y$ with
\begin{equation}
\label{eq:SmallInvariants}
Z(\Phi) \leq Z, D(\Phi) \leq D, I(\Phi) \leq I.
\end{equation}
\end{lemma}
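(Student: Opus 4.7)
The plan is to show that each $\Phi$ satisfying the bounds in \eqref{eq:SmallInvariants} is determined by a local rule with finite domain and finite codomain, giving a bound on the number of such maps.

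First, since $Y$ is aperiodic (hence contains no $\sigma$-periodic point), Remark~\ref{rem:NoPeriodicPoints} guarantees that each \dill{} map $\Phi : X \to Y$ has a \emph{unique} implementation $\phi : X \to A^*$. Thus it suffices to bound the number of admissible implementations.

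Next, I would unpack the two invariant bounds. By the definition of in-radius, $I(\phi) \leq I$ means that $\phi(x)$ is fully determined by $x_{[0,I]}$, so $\phi$ factors through a local rule $\phi_0 : \B_{I+1}(X) \to A^*$ via $\phi(x) = \phi_0(x_{[0,I]})$. To bound the codomain, I would apply the definition of $D$ with $n = 1$: for every $x \in X$,
\[ |\phi(x)| = |({}^1\phi)(x)| \in [Z(\Phi) - D(\Phi),\, Z(\Phi) + D(\Phi)] \subseteq [0, Z + D], \]
so the image of $\phi_0$ lies inside $A^{\leq Z+D} = \bigcup_{k=0}^{Z+D} A^k$.

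Finally, both $\B_{I+1}(X) \subseteq A^{I+1}$ and $A^{\leq Z+D}$ are finite sets, so there are only finitely many functions $\phi_0 : \B_{I+1}(X) \to A^{\leq Z+D}$. Each such function determines at most one candidate implementation, and by the uniqueness from the first step, each \dill{} map $\Phi$ with the bounds in \eqref{eq:SmallInvariants} corresponds to exactly one such $\phi_0$. Hence the set of such $\Phi$ is finite. No step looks like a real obstacle; the only thing to double-check is that the out-radius bound $O(\phi) \leq Z(\Phi) + D(\Phi)$ is exactly the $n=1$ case of the definition of $D$, which is already used in the proof of Lemma~\ref{lem:InvariantsInComposition}.
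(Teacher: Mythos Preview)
Your proof is correct and follows essentially the same approach as the paper: use aperiodicity of $Y$ to get a unique implementation, bound the in-radius by $I$ and the out-radius by $Z+D$ (via the $n=1$ case of the definition of $D$), and then count functions from a finite domain to a finite codomain. The paper's proof is just a terser version of exactly this argument, even giving the explicit count $(\sum_{k=0}^{Z+D}|A|^k)^{|A|^{I+1}}$.
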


\begin{proof}
If $\Phi : X \to Y$ is such a \dill{} map, its unique implementation has in-radius at most $I$ and out-radius at most $Z + D$. The claim follows because there are at most $(\sum_{k=0}^{Z + D} |A|^k)^M$ choices for such a function, where $M = |A|^{I+1}$. \qed
\end{proof}

\section{Block Maps between Substitutions of Balanced Growth}
\label{sec:Finite}

We are now ready to prove our main theorem. A similar (but stronger) result was proved in \cite{HoPa89} for uniform primitive substitutions. In \cite{Ol13}, it was proved for a general class of Sturmian systems (including for example the subshift generated by the Fibonacci substitution) that the endomorphism monoid consists of the shift maps only.

\begin{theorem}
\label{thm:FinitelyManyCA}
Let $\tau$ and $\rho$ be primitive aperiodic substitutions with balanced growth whose associated matrices have the same dominant eigenvalue $\lambda > 1$. Then there exists a finite set $P$ of block maps from $X_\tau$ to $X_\rho$ such that if $f : X_\tau \to X_\rho$ is a block map, then $f = \sigma^k \circ g$ or $g = \sigma^k \circ f$ for some $g \in P$ and $k \in \N$.
\end{theorem}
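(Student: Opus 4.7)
Following the strategy of Proposition~\ref{prop:UniformCase}, I plan to repeatedly conjugate $f$ by the substitutions, observe that the process eventually enters a finite set of \dill{} maps, and then use injectivity of conjugation (up to almost equivalence) to recover $f$ from the finite data. Fix almost inverses $\tau^{-1}, \rho^{-1}$ of balanced growth (Lemma~\ref{lem:InvertibleAndBalanced}), and for a block map $f : X_\tau \to X_\rho$ set $\Phi_i = \rho^{-i} \circ f \circ \tau^i$. Lemma~\ref{lem:Conjugation} yields $Z(\Phi_i) = 1$, $D(\Phi_i) \leq D$ for every $i$, and $I(\Phi_i) \leq I$ for all sufficiently large $i$, where $D$ and $I$ depend only on $\tau$ and $\rho$. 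Since $X_\rho$ is aperiodic, Lemma~\ref{lem:SmallImplementation} shows that the set $Q$ of \dill{} maps $X_\tau \to X_\rho$ meeting these bounds is finite, so $\Phi_i \in Q$ for $i \geq i_0$, and pigeonhole gives $t \geq i_0$ and $j > 0$ with $\Phi_{t+j} = \Phi_t$.

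The next step I would establish is that the conjugation operator $C(g) = \rho^{-1} \circ g \circ \tau$ is injective up to $\sim$ on \dill{} maps from $X_\tau$ to $X_\rho$. From $C(g) \sim C(h)$, composing with $\rho$ on the left and using $\rho \circ \rho^{-1} \sim \ID$ together with the congruence property (Lemma~\ref{lem:AlmoIsNice}) gives $g \circ \tau \sim h \circ \tau$. To cancel $\tau$ on the right, I would lean on strict recognizability of $\tau$ (Lemma~\ref{lem:Mosse}): any $y \in X_\tau$ can be written as $\sigma^n(\tau(x))$ with $n$ bounded by $\max_a |\tau(a)|$, and then the cocycles of $g$ and $h$ convert the shift relation on $\tau(X_\tau)$ into a shift relation on all of $X_\tau$, yielding $g \sim h$.

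Applying this injectivity $t$ times to $C^t(C^j(f)) = C^t(f)$ gives $C^j(f) \sim f$, and iterating with the congruence gives $C^{nj}(f) \sim f$ for every $n \in \N$; choosing $n$ large enough places $C^{nj}(f) = \Phi_{nj}$ inside $Q$, so $f \sim \Phi$ for some $\Phi \in Q$. I would then define $P$ by selecting, for each $\Phi \in Q$ that is $\sim$-equivalent to some block map, a single such representative. By Lemma~\ref{lem:Almo}, any two block maps almost equivalent to the same $\Phi$ differ by a shift, so $P$ is finite and captures every such class up to the shift. Finally, from $f \sim \Phi \sim g$ for some $g \in P$, another application of Lemma~\ref{lem:Almo} produces $k \in \N$ with $f = \sigma^k \circ g$ or $g = \sigma^k \circ f$.

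The principal obstacle I expect is the injectivity of $C$ on general \dill{} maps: during the iteration the conjugates $C^i(f)$ are typically not block maps, so the cleaner Lemma~\ref{lem:Almo} is unavailable and one must argue directly with cocycles. Balanced growth of $\rho^{-1}$ is exactly what makes $\rho \circ \rho^{-1}$ well-behaved as a composition of \dill{} maps and almost equivalent to the identity, while recognizability of $\tau$ is what allows one to push almost equivalence from $\tau(X_\tau)$ to all of $X_\tau$.
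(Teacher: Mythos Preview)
Your proposal is correct and follows essentially the same route as the paper's proof: conjugate, land in a finite set $Q$ via Lemmas~\ref{lem:Conjugation} and~\ref{lem:SmallImplementation}, use pigeonhole, then pull back along $C$ and finish with Lemmas~\ref{lem:AlmoIsNice} and~\ref{lem:Almo}. The one place you work harder than necessary is what you flag as the ``principal obstacle'': injectivity of $C$ up to $\sim$ requires no separate recognizability argument to cancel $\tau$, since $C$ has an explicit two-sided almost inverse $\Phi \mapsto \rho \circ \Phi \circ \tau^{-1}$, and then $\tau \circ \tau^{-1} \sim \ID_{X_\tau}$, $\rho \circ \rho^{-1} \sim \ID_{X_\rho}$ together with Lemma~\ref{lem:AlmoIsNice} give bijectivity of $C$ up to $\sim$ in one line.
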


In terms of our invariants, the assumptions are that $Z(\tau), Z(\rho), D(\tau)$ and $D(\rho)$ all exist and $Z(\tau) = Z(\rho) = \lambda > 1$. Both uniform and Pisot substitutions were shown to have balanced growth in Section~\ref{sec:BalanceAndInvertibility}, so the result holds in particular for these classes. It is known that $Z(\tau) = Z(\rho)$ is impossible if one of $\tau$ and $\rho$ is uniform and the other is Pisot.

%Note that it is possible that, for example, $\tau$ is uniform and $\rho$ is Pisot.

\begin{proof}
Let $Q$ be the set of \dill{} maps from $X_\tau$ to $X_\rho$ satisfying \eqref{eq:SmallInvariants} for $Z = 1$, $D = D_{\tau, \rho}$ and $I = I_{\tau, \rho}$. The set $Q$ is finite by Lemma~\ref{lem:SmallImplementation}.

Let $f : X_\tau \to X_\rho$ be a block map, which is in particular a \dill{} map. Since $\tau$ and $\rho$ are balanced, and almost invertible by Lemma~\ref{lem:InvertibleAndBalanced}, we can apply Lemma~\ref{lem:Conjugation} to see that for some $n \in \N$, the invariants $D(\Phi_i)$ and $I(\Phi_i)$ of $\Phi_i = \rho^{-i} \circ f \circ \tau^i$ are smaller than the respective constants $D_{\tau, \rho}$ and $I_{\tau, \rho}$ for all $i \geq n$. Then $\Phi_i \in Q$ for all $i \geq n$.

Since $Q$ is finite, we must in fact have $\Phi_{i + j} = \Phi_i$ for some $i \geq n$ and $j > 0$. Because the operation $\Phi \mapsto \rho^{-1} \circ \Phi \circ \tau$ is bijective up to almost equivalence (its reverse being $\Phi \mapsto \rho \circ \Phi \circ \tau^{-1}$), we must then have $f \sim \Phi_{i+\ell} \in Q$ for some $\ell \in [0,j-1]$ by Lemma~\ref{lem:AlmoIsNice}.

If two block maps are almost equivalent to the same \dill{} map $\Phi \in Q$, they are almost equivalent to each other by Lemma~\ref{lem:AlmoIsNice}, and thus one is a shift of the other by Lemma~\ref{lem:Almo}. We choose $P$ as a set of representatives of these finitely many almost equivalence classes. \qed
\end{proof}

As we remarked in Section~\ref{sec:OrbitPreservingMaps}, Theorem~\ref{thm:FinitelyManyCA} can be applied also in the case of two-sided subshifts, as each block map between the two-sided subshifts becomes a block map between the corresponding one-sided subshifts when composed with a large enough power of the shift. Almost equivalence is also preserved by this operation (with the obvious definition in the two-sided case). This gives the following direct corollary.

\begin{theorem}
\label{thm:FinitelyManyCA2way}
Let $\tau$ and $\rho$ be primitive aperiodic substitutions with balanced growth whose associated matrices have the same dominant eigenvalue $\lambda > 1$. Then there exists a finite set of block maps $P$ from $X_\tau^\leftrightarrow$ to $X_\rho^\leftrightarrow$ such that if $f : X_\tau^\leftrightarrow \to X_\rho^\leftrightarrow$ is a block map, then $f = \sigma^k \circ g$ for some $g \in P$ and $k \in \Z$.
\end{theorem}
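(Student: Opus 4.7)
The plan is to reduce this statement directly to Theorem~\ref{thm:FinitelyManyCA} via the one-sided / two-sided correspondence introduced at the start of Section~\ref{sec:OrbitPreservingMaps}. Given a block map $f : X_\tau^\leftrightarrow \to X_\rho^\leftrightarrow$ of radius $r$, the composition $\sigma^r \circ f$ has a purely nonnegative neighborhood $[0,2r]$, so its local rule depends only on forward coordinates of the input. I would use this to define a one-sided block map $\tilde f : X_\tau \to X_\rho$ by $\tilde f(y)_i = f(x)_{i+r}$ for any two-sided $x\in X_\tau^\leftrightarrow$ extending $y$; this is well defined because the window $x_{[i,i+2r]}$ lies entirely inside the right tail.

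Next I would apply Theorem~\ref{thm:FinitelyManyCA} to $\tilde f$, producing a finite set $P_0$ of one-sided block maps $X_\tau \to X_\rho$ and some $g \in P_0$, $k \in \N$ with either $\tilde f = \sigma^k \circ g$ or $g = \sigma^k \circ \tilde f$. The local rule of each $g \in P_0$ has a nonnegative neighborhood, hence it also defines a two-sided block map $\bar g : X_\tau^\leftrightarrow \to X_\rho^\leftrightarrow$; the image lands in $X_\rho^\leftrightarrow$ because $X_\tau$ and $X_\tau^\leftrightarrow$ share the same language of finite factors, so every factor appearing in $\bar g(x)$ already occurs in $g(y)$ for some $y \in X_\tau$, and therefore in $X_\rho$.

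Finally, I would lift the one-sided relation to the two-sided world. Both $\sigma^r \circ f$ and $\sigma^k \circ \bar g$ (respectively $\bar g$ and $\sigma^{k+r} \circ f$) commute with $\sigma$ and, by construction, agree on the right tail of every $x \in X_\tau^\leftrightarrow$; combined with shift-invariance of $X_\tau^\leftrightarrow$ and shift-equivariance, this forces them to coincide on all of $X_\tau^\leftrightarrow$. Hence $f$ equals some integer shift of $\bar g$, and the finite set $P := \{\bar g : g \in P_0\}$ witnesses the conclusion. The only subtlety I anticipate is the bookkeeping of shift exponents when both cases of the one-sided dichotomy are absorbed into a single $\sigma^k$ with $k \in \Z$; this collapse works here precisely because $\sigma$ is a homeomorphism on the two-sided subshifts, and there is no new structural content beyond Theorem~\ref{thm:FinitelyManyCA}.
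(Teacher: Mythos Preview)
Your proposal is correct and follows essentially the same reduction as the paper: compose $f$ with a large enough shift to make its neighborhood nonnegative, pass to the one-sided subshifts, invoke Theorem~\ref{thm:FinitelyManyCA}, and lift the resulting finite set of representatives back to the two-sided setting using that $\sigma$ is a homeomorphism there. The paper presents this as a one-paragraph remark rather than a formal proof, but the content is the same as what you wrote out in detail.
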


By the results of \cite{Du00}, all endomorphisms of a two-sided subshift generated by a primitive substitution are in fact automorphisms: since the subshift generated is minimal, all endomorphisms are factor maps, and \cite[Corollary~18]{Du00} then shows that they are automorphisms. In the following, if a group $G$ has a subgroup of finite index isomorphic to another group $H$, then we say that $G$ is \emph{virtually $H$}. Using these observations, we can state Theorem~\ref{thm:FinitelyManyCA2way} in a more simple form in the special case $\tau = \rho$

\begin{corollary}
\label{cor:FinitelyManyCA2way2}
The endomorphism monoid of the two-sided subshift of a primitive aperiodic substitution of balanced growth is a group, and is virtually $\Z$ (the subgroup isomorphic to $\Z$ being the subgroup of shifts).
\end{corollary}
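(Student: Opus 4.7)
The plan is to assemble the corollary from two ingredients already at our disposal: the fact that endomorphisms of $X_\tau^\leftrightarrow$ are automorphisms, and the decomposition provided by Theorem~\ref{thm:FinitelyManyCA2way}.

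First, I would invoke the paragraph preceding the corollary: since $\tau$ is primitive, $X_\tau^\leftrightarrow$ is minimal, so every endomorphism is a (surjective) factor map, and \cite[Corollary~18]{Du00} upgrades such a factor map to an automorphism. Hence the endomorphism monoid $G := \mathrm{End}(X_\tau^\leftrightarrow)$ is a group, establishing the first half of the statement.

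Next, I would identify the shift subgroup and bound its index. Let $S := \langle \sigma \rangle \leq G$; aperiodicity of $\tau$ guarantees that $X_\tau^\leftrightarrow$ contains no $\sigma$-periodic points, so $\sigma$ has infinite order in $G$ and $S \cong \Z$. Now apply Theorem~\ref{thm:FinitelyManyCA2way} with $\rho = \tau$: there is a finite set $P$ of block maps such that every $f \in G$ admits a decomposition $f = \sigma^k \circ g$ with $g \in P$ and $k \in \Z$. Rewriting this as $f \in S \cdot g$, we obtain the coset decomposition $G = \bigcup_{g \in P} S g$, which immediately yields $[G : S] \leq |P| < \infty$. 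Thus $G$ contains the infinite cyclic subgroup $S$ of finite index, i.e.\ $G$ is virtually $\Z$.

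I do not expect a genuine obstacle here, since the serious work has already been carried out in Theorem~\ref{thm:FinitelyManyCA2way} and in the cited result of Durand. The only points that warrant a brief comment in the final write-up are that `virtually $\Z$' requires only a finite-index subgroup isomorphic to $\Z$ (no normality needed), and that aperiodicity is precisely what prevents $S$ from collapsing to a finite cyclic group; both are handled in a single line each.
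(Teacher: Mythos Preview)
Your proposal is correct and follows exactly the approach the paper indicates: the paragraph preceding the corollary already supplies the Durand argument that every endomorphism is an automorphism, and you then specialize Theorem~\ref{thm:FinitelyManyCA2way} to $\rho=\tau$ to bound $[G:\langle\sigma\rangle]$ by $|P|$, which is precisely how the paper derives the corollary. Your added remarks (that aperiodicity ensures $\langle\sigma\rangle\cong\Z$ and that normality is not required for ``virtually~$\Z$'') are appropriate clarifications the paper leaves implicit.
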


Note that it is not true for all substitutions that all endomorphisms are surjective, as the subshift generated by the substitution
\[ 0 \mapsto 010, 1 \mapsto 11 \]
has the non-surjective endomorphism $x \mapsto {}^\infty 1^\infty$. In particular, the assumption of primitivity is needed in Corollary~\ref{cor:FinitelyManyCA2way2}.

Recall Example~\ref{ex:ManyUniform} from the uniform case, where we constructed an arbitrary number of not almost equivalent endomorphisms with arbitrarily large radii on the subshift of a primitive uniform substitution. One might ask whether such a construction can be carried out in the non-uniform case. The following shows the answer to be positive, with surprisingly few modifications.

%the finitely many maps must in fact be symbol maps in the non-uniform case, as in the result of \cite{Ol13}. However, there are examples where the maps have a larger radius, and in fact, we can construct arbitrarily large radii also in the non-uniform case.

\begin{example}
Recall the notation of Example~\ref{ex:ManyUniform}. Let again $m \in \N$ and $n \geq 4$, and define $\tau(a_i) = b_{i+1} a_i^{n-1}$ and $\tau(b_i) = b_i a_i^n$. Now the substitution $\tau$ is not uniform, but the original argument can be directly applied, providing a non-uniform primitive substitution and $m-1$ pairwise ${\sim}$-nonequivalent endomorphisms with large neighborhoods. \qed
\end{example}

The substitution in the previous example is not Pisot, and we do not know examples of Pisot substitutions with large numbers of nonequivalent endomorphisms. For example, \cite{Ol13} shows that for Sturmian substitutions (which are a subclass of Pisot substitutions), the only morphisms are the shift maps.

While we strongly depend on the Pisot property (or rather, balanced growth) in our argument, we believe it is not needed for the result that the automorphism group is virtually $\Z$. One can also ask if, more generally, the automorphism group of a linearly recurrent subshift is virtually $\Z$. We would not be particularly surprised if this were the case, but it does not seem likely to us that our method can tackle this problem.

\begin{conjecture}
The automorphism group of the subshift of a primitive aperiodic substitution is virtually $\Z$.
\end{conjecture}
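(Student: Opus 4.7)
The plan is to extend the \dill{}-map conjugation argument of Theorem~\ref{thm:FinitelyManyCA} by finding a substitute for the constant invariant $D$ that survives the absence of balanced growth. Given a primitive aperiodic substitution $\tau$ and an automorphism $f$ of $X_\tau^\leftrightarrow$, I would still form $\Phi_i = \tau^{-i} \circ f \circ \tau^i$ (after Lemma~\ref{lem:InvertibleAndBalanced}, suitably extended) and try to show that, modulo almost equivalence, the sequence $(\Phi_i)_i$ eventually lands in a finite set, from which Lemma~\ref{lem:Almo} would recover the virtually $\Z$ conclusion.

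First I would replace $D(\phi)$ by a length-discrepancy function
\[ \Delta_\phi(n) = \max_{x, y \in X_\tau} \bigl| \, |(^n\phi)(x)| - |(^n\phi)(y)| \, \bigr|. \]
For a block map this is identically zero, and by unique ergodicity together with linear recurrence, $\Delta_\tau(n) = o(n)$ for any primitive aperiodic $\tau$, even without balanced growth. The next step is to prove composition inequalities in the spirit of Lemma~\ref{lem:InvariantsInComposition} and Lemma~\ref{lem:Threes} with $\Delta$ in place of $D$, of roughly the form
\[ \Delta_{\Phi_2 \circ \Phi_1}(n) \leq \Delta_{\Phi_2}\bigl(Z(\Phi_1)\,n + \Delta_{\Phi_1}(n)\bigr) + Z(\Phi_2)\,\Delta_{\Phi_1}(n) + O(1), \]
and an analogous bound for $I(\Phi_2 \circ \Phi_1)$ in which the contribution of the middle factor enters multiplied by $Z(\Phi_1)^{-1} = \lambda^{-1} < 1$.

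If these can be combined so that iteration produces $\Delta_{\Phi_i}(n) = o(n)$ uniformly in $i$, then the contractive estimate for the in-radius in Lemma~\ref{lem:Conjugation} carries over: $I(\Phi_{i+1}) \leq \lambda^{-1} I(\Phi_i) + o(1)\cdot$something, forcing $I(\Phi_i)$ to stay bounded eventually. With both $I(\Phi_i)$ and the relevant prefix of $\Delta_{\Phi_i}$ bounded, a finiteness lemma analogous to Lemma~\ref{lem:SmallImplementation} cuts $\{\Phi_i\}$ down to finitely many classes modulo $\sim$, and the pigeonhole step at the end of the proof of Theorem~\ref{thm:FinitelyManyCA} carries through unchanged.

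The main obstacle is the propagation of sublinearity. The substitution $\tau(0) = 0001$, $\tau(1) = 110$ of Section~\ref{sec:BalanceAndInvertibility} has $\Delta_\tau(n)$ growing polynomially by \cite{Ad04}, so under repeated composition the sublinear errors can in principle stack and conspire against a clean contractive estimate. Quantifying $\Delta_\tau(n)$ precisely in terms of the non-dominant eigenvalues of the incidence matrix, and showing that this growth rate is preserved (or at worst not amplified) by conjugation by $\tau$ and $\tau^{-1}$, looks like the true technical difficulty; it is entirely possible that the \dill{}-map framework as developed here cannot be pushed through, and that the conjecture is better approached by combinatorial arguments on return words under linear recurrence, where uniform bounds on return-word lengths could substitute directly for balanced growth.
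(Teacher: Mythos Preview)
This statement is an open \emph{conjecture} in the paper, not a theorem: the authors offer no proof and explicitly say they ``strongly depend'' on balanced growth and that ``it does not seem likely to us that our method can tackle'' even the linearly recurrent case. So there is no paper proof to compare against; what you have written is a research plan, and you are candid about this.

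As a plan it is the natural thing to try, and you have correctly located the crux. A few concrete remarks on where the argument, as sketched, does not yet close.

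First, the finiteness step is fine: once $Z(\Phi_i)=1$, bounding $I(\Phi_i)$ and $\Delta_{\Phi_i}(1)$ does bound the out-radius (since the $\mu$-average of $|\phi_i|$ is $Z(\Phi_i)=1$, so $\max|\phi_i|\leq 1+\Delta_{\Phi_i}(1)$), and Lemma~\ref{lem:SmallImplementation} goes through verbatim.

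The real gap is the contractive estimate for $I$. In the balanced case, $D(\Phi_i)$ is a single number entering additively; with your $\Delta$, what actually appears in the analogue of Lemma~\ref{lem:DetermineTheFirst} and \eqref{eq:CompI} is $\Delta_{\Phi_i}$ evaluated at a scale comparable to $I(\Phi_i)$ itself, so the recursion becomes something like
\[
I(\Phi_{i+1}) \;\leq\; \lambda^{-1}\bigl(I(\Phi_i) + c\,\Delta_{\Phi_i}(I(\Phi_i))\bigr) + C.
\]
For this to contract you need $\Delta_{\Phi_i}(n)=o(n)$ \emph{uniformly in $i$}, exactly as you say. But your composition inequality, even if proved in the form you wrote, does not preserve a uniform sublinear profile: composing $\Delta_{\tau^{-1}}$, $\Delta_{\Phi_i}$, and $\Delta_\tau$ feeds the output scale of one into the input of the next, and when $\Delta_\tau(n)\asymp n^{\beta}$ with $\beta=\log|\mu|/\log\lambda$ for a subdominant eigenvalue $|\mu|>1$ (as in the $0\mapsto 0001$, $1\mapsto 110$ example), repeated conjugation can in principle push the implied constants, or even the exponent, upward with $i$. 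You have not shown---and I do not see how to show with the tools here---that this amplification does not occur. Until that is controlled, the pigeonhole step cannot be reached.

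Your closing suggestion, to abandon the \dill{}-map invariants and work instead with return words and linear recurrence, is in line with the authors' own hint; that is probably where a proof, if one exists along these lines, will come from.
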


\begin{question}
Is the automorphism group of every linearly recurrent subshift virtually $\Z$?
\end{question}

Our result could also be generalized in another direction, namely, measure-preserving maps between subshifts generated by primitive substitutions. Note that the subshift of a primitive substitution supports a unique shift-invariant probability measure \cite{Du00}. The uniform case has been partially solved in \cite{HoPa89}, where it was shown that there are only finitely many (up to shifting and almost everywhere equivalence) measure-preserving, almost everywhere shift-commuting maps between two subshifts generated by uniform primitive substitutions satisfying certain injectivity conditions.

\begin{question}
Does the result of \cite{HoPa89} hold in the Pisot case? The general primitive case?
\end{question}

%As for the case of morphisms between two subshifts, it is interesting that our method only works when the Frobenius eigenvalues are the same: By the results of \cite{Du00}, a linearly recurrent subshift has only finitely many subshift factors, so that there almost never exist maps between the subshifts of two substitutions at all.

\section*{Acknowledgements}

We are thankful to our advisor Jarkko Kari and Luca Zamboni for their guidance and support, Fabien Durand for his help with finding references, Timo Jolivet for suggesting the idea behind Example~\ref{ex:ManyUniform}, Charalampos Zinoviadis for his help on improving the style of this article, and of course the anonymous referee for their valuable comments.

\bibliographystyle{plain}
\bibliography{../../../bib/bib}{}

\def\ocirc#1{\ifmmode\setbox0=\hbox{$#1$}\dimen0=\ht0 \advance\dimen0
  by1pt\rlap{\hbox to\wd0{\hss\raise\dimen0
  \hbox{\hskip.2em$\scriptscriptstyle\circ$}\hss}}#1\else {\accent"17 #1}\fi}
\begin{thebibliography}{10}

\bibitem{Ad04}
Boris Adamczewski.
\newblock Symbolic discrepancy and self-similar dynamics.
\newblock In {\em Annales de l'institut Fourier}, volume~54, pages 2201--2234.
  Chartres: L'Institut, 1950-, 2004.

\bibitem{BoTo98}
Mike Boyle and Jun Tomiyama.
\newblock Bounded topological orbit equivalence and $c^{*}$-algebras.
\newblock {\em Journal of the Mathematical Society of Japan}, 50(2):317--329,
  1998.

\bibitem{CoDyKeLe12}
E.~M. {Coven}, A.~{Dykstra}, M.~{Keane}, and M.~{LeMasurier}.
\newblock {Necessary and sufficient conditions for a dynamical system to be
  topologically conjugate to a given substitution minimal system}.
\newblock {\em ArXiv e-prints}, September 2012.

\bibitem{Co71}
Ethan~M Coven.
\newblock Endomorphisms of substitution minimal sets.
\newblock {\em Probability Theory and Related Fields}, 20(2):129--133, 1971.

\bibitem{CoKeLe08}
Ethan~M. Coven, Michael Keane, and Michelle LeMasurier.
\newblock A characterization of the morse minimal set up to topological
  conjugacy.
\newblock {\em Ergodic Theory and Dynamical Systems}, 28:1443--1451, 10 2008.

\bibitem{DuHoSk99}
F.~Durand, B.~Host, and C.~Skau.
\newblock Substitutional dynamical systems, {B}ratteli diagrams and dimension
  groups.
\newblock {\em Ergodic Theory Dynam. Systems}, 19(4):953--993, 1999.

\bibitem{Du00}
Fabien Durand.
\newblock Linearly recurrent subshifts have a finite number of non-periodic
  subshift factors.
\newblock {\em Ergodic Theory and Dynamical Systems}, 20:1061--1078, 7 2000.

\bibitem{GiPuSk95}
Thierry Giordano, Ian~F. Putnam, and Christian~F. Skau.
\newblock Topological orbit equivalence and {$C^*$}-crossed products.
\newblock {\em J. Reine Angew. Math.}, 469:51--111, 1995.

\bibitem{GiPuSk09}
Thierry Giordano, Ian~F Putnam, and Christian~F Skau.
\newblock Cocycles for cantor minimal $\mathbb{Z}^d$-systems.
\newblock {\em International Journal of Mathematics}, 20(09):1107--1135, 2009.

\bibitem{HaLi86}
Tero Harju and Matti Linna.
\newblock On the periodicity of morphisms on free monoids.
\newblock {\em RAIRO Inform. Th\'eor. Appl.}, 20(1):47--54, 1986.

\bibitem{HoPa89}
B.~Host and F.~Parreau.
\newblock Homomorphismes entre syst{\`e}mes dynamiques d{\'e}finis par
  substitutions.
\newblock {\em Ergodic Theory and Dynamical Systems}, 9:469--477, 8 1989.

\bibitem{Mo92}
Brigitte Moss{\'e}.
\newblock Puissances de mots et reconnaissabilit\'e des points fixes d'une
  substitution.
\newblock {\em Theoret. Comput. Sci.}, 99(2):327--334, 1992.

\bibitem{Mo96a}
Brigitte Moss{\'e}.
\newblock Reconnaissabilit{\'e} des substitutions et complexit{\'e} des suites
  automatiques.
\newblock {\em Bulletin de la Soci{\'e}t{\'e} Math{\'e}matique de France},
  124(2):329--346, 1996.

\bibitem{Ol13}
Jeanette Olli.
\newblock Endomorphisms of sturmian systems and the discrete chair substitution
  tiling system.
\newblock {\em Dynamical Systems}, 33(9):4173--4186, 2013.

\bibitem{Pa86}
Jean-Jacques Pansiot.
\newblock Decidability of periodicity for infinite words.
\newblock {\em RAIRO Inform. Th\'eor. Appl.}, 20(1):43--46, 1986.

\bibitem{Qu87}
Martine Queff{\'e}lec.
\newblock {\em Substitution dynamical systems---spectral analysis}, volume 1294
  of {\em Lecture Notes in Mathematics}.
\newblock Springer-Verlag, Berlin, 1987.

\end{thebibliography}

\end{document}